\newtheorem{thm}{Theorem}
\newtheorem{Lemma}[thm]{Lemma}
\newtheorem{Proposition}[thm]{Proposition}
\newtheorem{Corollary}[thm]{Corollary}
\theoremstyle{remark}
\newtheorem{rem}[thm]{Remark}
\theoremstyle{definition}
\newtheorem{Example}[thm]{Example}
\definecolor{wwwwww}{rgb}{0.4,0.4,0.4}
\begin{document}
\thanks{This study was financed in part by the Coordena\c{c}\~ao de Aperfei\c{c}oamento de Pessoal de N\'ivel Superior - Brasil (CAPES) Finance Code 001- from September, 2016 to March, 2021}
\title[\resizebox{6.1in}{!}{Constant weighted mean curvature hypersurfaces in Shrinking Ricci Solitons}]{Constant weighted mean curvature hypersurfaces in Shrinking Ricci Solitons}

\author[Igor Miranda]{Igor Miranda}
\address{\sc Igor Miranda\\
Instituto de Matem\'atica e Estat\'istica, Universidade Federal Fluminense, Campus Gragoat\'a, Niter\'oi, RJ \\
24210-200,\\ Brazil}
\email{igor\_miranda@id.uff.br}

\author[Matheus Vieira]{Matheus Vieira}
\address{\sc Matheus Vieira\\
	Departamento de Matem\'atica, Universidade Federal do Esp\'irito Santo, Vit\'oria, ES\\
	29075-910\\ Brazil}
\email{matheus.vieira@ufes.br}

\subjclass[2010]{MSC 53C42 \and MSC 53C44}
\keywords{level sets, constant weighted mean curvature, f-minimal, shrinking Ricci soliton, $\lambda$-hypersurfaces}

\begin{abstract}
In this paper, we study constant weighted mean curvature hypersurfaces in shrinking Ricci solitons. First, we show that a constant weighted mean curvature hypersurface with finite weighted volume cannot lie in a region determined by a special level set of the potential function, unless it is the level set. Next, we show that a compact constant weighted mean curvature hypersurface with a certain upper bound or lower bound on the mean curvature is a level set of the potential function. We can apply both results to the cylinder shrinking Ricci soliton ambient space. Finally, we show that a constant weighted mean curvature hypersurface in the Gaussian shrinking Ricci soliton (not necessarily properly immersed) with a certain assumption on the integral of the second fundamental form must be a generalized cylinder.
\end{abstract}

\maketitle 
\section{Introduction}

Let $\left(\bar{M}^{n+1},\bar{g},f\right)$ be a smooth metric measure
space and let $M^{n}$ be a oriented hypersurface in $\bar{M}^{n+1}$. The weighted mean curvature vector $\vec{H}_{f}$ is defined by
\[
\vec{H}_{f}=\vec{H}+\left(\bar{\nabla}f\right)^{\perp}
\]
and the weighted mean curvature $H_f$ is defined by $\vec{H}_{f}= -H_fN$, where $\perp$ is the projection to the normal bundle and $N$ is the unit normal vector. A hypersurface $M^n$ is said to have constant weighted mean curvature (CWMC) if $H_f$ is a constant (see Section \ref{s2}). These hypersurfaces are also known as $\lambda$-hypersurfaces where $\lambda = H_f$. CWMC hypersurfaces can be seen as critical points of the weighted area functional with respect to weighted volume-preserving variations (see \cite{cheng2018complete}, \cite{mcgonagle2015hyperplane}). When $H_f = 0$ they are called $f$-minimal hypersurfaces. For certain choices of $f$, $f$-minimal hypersurfaces are very important singularities of the mean curvature flow in $\mathbb{R}^{n+1}$, namely self-shrinkers (see p.758 and p.768 in \cite{colding2012generic}), translating solitons (see p.153 and p.154 in \cite{hoffman2017notes}) and self-expanders (see p.9016 and p.9017 in \cite{bernstein2021smooth}). It turns out that $\left(\mathbb{R}^{n+1},\bar{g}_{\text{can}},f\right)$ is a Ricci soliton for each of these choices of $f$ (see Section \ref{s2}). Thus the study of self-shrinkers, translating solitons and self-expanders in $\mathbb{R}^{n+1}$ is equivalent to the study of $f$-minimal hypersurfaces in the shrinking, steady and expanding Ricci soliton $\left(\mathbb{R}^{n+1},\bar{g}_{\text{can}},f\right)$ respectively.

There has been a great interest in CWMC hypersurfaces in the Gaussian shrinking Ricci soliton $\left(\mathbb{R}^{n+1},\bar{g}_{\text{can}},f\right)$ with $f(x)=\frac{|x|^2}{4}$. In \cite{mcgonagle2015hyperplane} Mcgonagle-Ross classified stable CWMC hypersurfaces properly immersed in the Gaussian shrinking Ricci soliton. They also proved that there are no CWMC hypersurfaces properly immersed in this ambient space with index one. In \cite{cheng2016rigidity} Q.M.Cheng-Ogata-Wei classified complete CWMC hypersurfaces in the Gaussian shrinking Ricci soliton with a certain condition on the norm of the second fundamental form and the mean curvature. In \cite{cheng2018complete} Q.M.Cheng-Wei classified complete CWMC hypersurfaces embedded in the Gaussian shrinking Ricci soliton with polynomial volume growth and $H-H_f \geq 0$. In \cite{guang2018gap} Guang classified compact CWMC surfaces embedded in the Gaussian shrinking Ricci soliton with $H_f \geq 0$ and constant norm of the second fundamental form. He also classified complete CWMC hypersurfaces embedded in the Gaussian shrinking Ricci soliton with a certain condition on the norm of the second fundamental form. In \cite{cheng2021complete} Q.M.Cheng-Wei generalized this result to complete CWMC surfaces and removed the condition on $H_f$.

There has also been a great interest in f-minimal and CWMC hypersurfaces in the cylinder shrinking Ricci soliton $\left(\mathbb{R}^{n+1-k}\times S_{\sqrt{2\left(k-1\right)}}^{k},\bar{g},f\right)$, where $\bar{g}$ is the product metric and $f\left(x,y\right)=\frac{\left|x\right|^{2}}{4}$ (here $x$ is the position vector in $\mathbb{R}^{n-k+1}$ and $y$ is the position vector in $\mathbb{R}^{k+1}$). In \cite{cheng2015simons} X.Cheng-Mejia-Zhou classified compact f-minimal hypersurfaces in a cylinder shrinking Ricci soliton with a certain condition on the norm of the second fundamental form. They also classified compact f-minimal hypersurfaces in this ambient space with index one. In \cite{cheng2015stability} X.Cheng-Zhou generalized these results to complete hypersurfaces. In \cite{barbosa2020lambda} Barbosa-Santana-Upadhyay obtained theorems for CWMC hypersurfaces similar to the results of Mcgonagle-Ross \cite{mcgonagle2015hyperplane} but in the cylinder shrinking Ricci soliton ambient space. They also obtained theorems for CWMC hypersurfaces in cylinder shrinking Ricci soliton similar to the results of X.Cheng-Mejia-Zhou \cite{cheng2015simons} and X.Cheng-Zhou \cite{cheng2015stability} about $L_f$-stable (which are those that the second variation of weighted area is non-negative) $f$-minimal hypersurfaces.

In this paper we study geometric properties and classification results for CWMC hypersurfaces in shrinking Ricci solitons.

Let $\left(\bar{M}^{n+1},\bar{g},f\right)$ be a gradient Ricci soliton
with 
$$
\bar{R}ic+\bar{\nabla}\bar{\nabla}f=\lambda\bar{g},
$$
where $\lambda$ is a constant. We know that
\[
\left|\bar{\nabla}f\right|^{2}+\bar{R}-2\lambda f=C,
\]
where $\bar{R}$ is the scalar curvature of $\bar{M}^{n+1}$ and $C$
is a constant (see p.85 in \cite{hamilton1993formations} and Lemma 1.1 in \cite{cao2009recent}). Let us define
\[
D^\pm (c_1,c_2)=\left\{ x\in\bar{M}^{n+1};f\left(x\right)<\Gamma^\pm(x)\right\} ,
\]
where
\[
\Gamma^\pm(x)=\frac{1}{2\lambda}\left\lbrace \frac{1}{4}\left(\pm c_1+\sqrt{c_1^{2}+4c_2}\right)^{2}+\bar{R}(x)-C\right\rbrace
\]
and $c_1$ and $c_2$ are constants such that $c_1^2+4c_2\geq 0$.

Note that $D^+(c_1,c_2)$ and $D^-(c_1,c_2)$ are open subsets of $\bar{M}^{n+1}$. In this paper we will always use the above notations and conventions.

In Theorem 3 in \cite{vieira2018geometric}, Vieira-Zhou proved that if $M^n$ is a complete $f$-minimal hypersurface properly immersed in the cylinder shrinking Ricci soliton then: (a) $M^{n}$ cannot lie inside the closed product $\bar{B}^{n+1-k}_{\sqrt{2\left(n-k\right)}}\left(0\right)\times S_{\sqrt{2(k-1)}}^{k}(0)$, unless $M^{n}=S^{n-k}_{\sqrt{2\left(n-k\right)}}\left(0\right)\times S_{\sqrt{2(k-1)}}^{k}(0)$ (see also the earlier work of Cavalcante-Espinar \cite{10.1112/blms/bdv099} for CWMC hypersurfaces in the Gaussian shrinking Ricci soliton $\mathbb{R}^{n+1}$); (b) $M^{n}$ cannot lie outside the product $B^{n+1-k}_{\sqrt{2\left(n+1-k\right)}}\left(0\right)\times S_{\sqrt{2(k-1)}}^{k}(0)$. We generalize this result in two different ways: we extend the result to CWMC hypersurfaces and we consider an arbitrary shrinking Ricci soliton ambient space.

\begin{thm}\label{t3}
	Let $\bar{M}^{n+1}$ be a shrinking Ricci soliton and let $M^{n}$
	be a complete CWMC hypersurface immersed in $\bar{M}^{n+1}$ with finite weighted
	volume.
	
	(a) Suppose that $tr_{M^{n}}\bar{\nabla}\bar{\nabla}f\geq a$ for some
	$a>0$. If $M^{n}$ lies in $\overline{D^-(|H_f|,a)}$ then $M^{n}\subseteq \partial D^-(|H_f|,a)$.
	
	(b) Suppose that $tr_{M^{n}}\bar{\nabla}\bar{\nabla}f\leq b$ for some
	$b> 0$. If $M^{n}$ lies outside $D^+(|H_f|,b)$ then $M^{n}\subseteq \partial D^+(|H_f|,b)$.
\end{thm}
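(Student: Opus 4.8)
The plan is to use the drift Laplacian $\Delta_f = \Delta - \langle \bar\nabla f, \nabla\,\cdot\,\rangle$ applied to a suitable function built from $f$ restricted to $M^n$, together with the finite weighted volume hypothesis to force the function to be constant. First I would compute $\Delta_f (f|_{M^n})$. Using the soliton equation $\bar Ric + \bar\nabla\bar\nabla f = \lambda \bar g$ and the standard identity $|\bar\nabla f|^2 + \bar R - 2\lambda f = C$, one gets an expression of the form
\[
\Delta_f f = tr_{M^n}\bar\nabla\bar\nabla f - |\bar\nabla f|^2 + \langle \bar\nabla f, N\rangle^2 - H\,\langle \bar\nabla f, N\rangle,
\]
where I have used that $\bar\nabla f = (\bar\nabla f)^{\top} + \langle\bar\nabla f,N\rangle N$ and $\langle\bar\nabla f,N\rangle = H - H_f$ by the very definition of the weighted mean curvature (so $\vec H_f = -H_f N$ gives $\langle\bar\nabla f,N\rangle^\perp$ in terms of $H$ and $H_f$). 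Substituting $|\bar\nabla f|^2 = 2\lambda f - \bar R + C$ and $\langle\bar\nabla f,N\rangle = H - H_f$ turns the right-hand side into a first-order polynomial in $f$ plus curvature and mean-curvature terms; completing the square in $H$ produces exactly the quadratic $\tfrac14(\pm |H_f| + \sqrt{|H_f|^2 + 4a})^2$ that appears in the definition of $\Gamma^\pm$. This is the computational heart of the argument and the reason the peculiar-looking sets $D^\pm(c_1,c_2)$ are defined the way they are.

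Next I would consider the function $u = \Gamma^{\mp} - f$ on $M^n$ (sign chosen to match part (a) or (b)); the hypothesis "$M^n$ lies in $\overline{D^-(|H_f|,a)}$'' says precisely $u \ge 0$ on $M^n$ in case (a), and similarly $u\le 0$ (after the obvious sign flip) in case (b). The key computation above, combined with the bound $tr_{M^n}\bar\nabla\bar\nabla f \ge a$ in case (a), should yield a differential inequality of the form $\Delta_f u \le 0$ (respectively $\ge 0$), i.e. $u$ is $f$-superharmonic (respectively $f$-subharmonic) and has the right sign. Here I expect to need that $\bar R$ enters $\Gamma^\pm$ in a way that cancels correctly — that is, the term $\Delta_f \bar R$ along $M^n$ has to be controlled, which I believe one handles by noting that on a Ricci soliton $\bar\nabla \bar R = 2\bar Ric(\bar\nabla f)$ and using the soliton equation again; this is the step I'd watch most carefully.

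Finally, I would invoke the finite-weighted-volume hypothesis through a Liouville-type / maximum principle: a nonnegative $f$-superharmonic function on a complete weighted manifold with finite weighted volume is constant. (This is the weighted analogue of the classical fact and is the standard tool in this circle of results — e.g. it underlies Theorem 3 of Vieira–Zhou cited above; I would either cite it or reprove it quickly by integrating $\Delta_f u$ against a cutoff and using $\int_{M^n} e^{-f} < \infty$.) Thus $u$ is constant, hence $\Delta_f u = 0$, which forces the differential inequality to be an equality; tracing back, equality in the completion-of-the-square step forces $\langle\bar\nabla f,N\rangle = H - H_f$ to take the extremal value, equality in $tr_{M^n}\bar\nabla\bar\nabla f \ge a$ forces that trace to equal $a$, and $u\equiv 0$ means $f \equiv \Gamma^{\mp}$ on $M^n$, i.e. $M^n \subseteq \partial D^-(|H_f|,a)$. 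Part (b) is entirely parallel with the inequalities and the choice of $\Gamma^+$ versus $\Gamma^-$ reversed. The main obstacle, as indicated, is bookkeeping the curvature terms so that the quadratic in the definition of $\Gamma^\pm$ emerges cleanly and the resulting inequality has a definite sign; once that is in hand the finite-weighted-volume step is routine.
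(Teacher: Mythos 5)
Your overall strategy --- compute $\Delta_f f$ on the hypersurface, recognize a quadratic in the gradient of $f$ whose positive root is the constant appearing in $\Gamma^{\pm}$, and invoke the Liouville/parabolicity principle for complete weighted manifolds with finite weighted volume --- is exactly the paper's. But the specific step you set up does not work: you propose to apply the Liouville theorem to $u=\Gamma^{\mp}-f$, which forces you to control $\Delta_f\Gamma^{\mp}=\frac{1}{2\lambda}\Delta_f(\bar{R}|_{M^{n}})$ along $M^{n}$, since $\Gamma^{\pm}$ depends on the point through $\bar{R}(x)$ and the theorem does not assume constant scalar curvature. You flag this yourself as the step to watch, but the identity $\bar{\nabla}\bar{R}=2\bar{R}ic(\bar{\nabla}f)$ only gives the first derivative of $\bar{R}|_{M^{n}}$; its drift Laplacian involves the ambient Hessian of $\bar{R}$ and second fundamental form terms, none of which are controlled by the hypotheses $tr_{M^{n}}\bar{\nabla}\bar{\nabla}f\geq a$ and $f\leq\Gamma^{-}$. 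So the differential inequality $\Delta_f u\leq 0$ is not established, and there is no reason for it to hold in general.

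The repair is to never differentiate $\Gamma^{\pm}$: apply the Liouville principle to $f$ itself. The soliton identity $|\bar{\nabla}f|^{2}=2\lambda f-\bar{R}+C$ converts the hypothesis $f\leq\Gamma^{-}$ on $M^{n}$ into the pointwise bound $|\bar{\nabla}f|\leq\frac{1}{2}(-|H_f|+\sqrt{H_f^{2}+4a})$, which is precisely the condition $a-|H_f||\bar{\nabla}f|-|\bar{\nabla}f|^{2}\geq 0$; combined with $\Delta_f f=tr_{M^{n}}\bar{\nabla}\bar{\nabla}f+\langle\bar{\nabla}f,\vec{H}_{f}\rangle-|\bar{\nabla}f|^{2}\geq a-|H_f||\bar{\nabla}f|-|\bar{\nabla}f|^{2}$, this gives $\Delta_f f\geq 0$ with $f$ bounded above, so $f$ is constant, and then $0=\Delta_f f$ forces equality in the quadratic, i.e. $f\equiv\Gamma^{-}$. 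Two smaller corrections: the square to complete is in the variable $|\bar{\nabla}f|$ (the positive root of $t^{2}+|H_f|t-a=0$), not in $H$ --- the hypothesis controls the full gradient, whereas $H$ only sees the normal component $\langle\bar{\nabla}f,N\rangle=H-H_f$, and the inequality $|\langle\bar{\nabla}f,N\rangle|\leq|\bar{\nabla}f|$ must be used in the right direction; and in the equality analysis you neither need nor obtain $tr_{M^{n}}\bar{\nabla}\bar{\nabla}f\equiv a$, only that $|\bar{\nabla}f|$ equals the positive root everywhere.
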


We remark that $\bar{\nabla}\bar{\nabla}f$ is a $(0,2)$ tensor in $\bar{M}^{n+1}$, so we can consider its restriction to $M^n$ (in this case the trace $tr_{M^{n}}\bar{\nabla}\bar{\nabla}f$ is a function in $M^n$). Note that we can state Theorem \ref{t3} in a different way using the fact that
	\[
	tr_{M^{n}}\bar{\nabla}\bar{\nabla}f=n\lambda-\bar{R}+\bar{R}ic\left(N,N\right),
	\]
where $N$ is the unit normal vector of $M^n$.

Using the above result together with a classification result for the level sets of the potential function (see Theorem \ref{lema1}) we obtain a new result for the cylinder shrinking Ricci soliton ambient space.

\begin{Corollary}\label{c1}
	Let $M^{n}$ be a complete CWMC hypersurface properly immersed in
	the cylinder shrinking Ricci soliton $\mathbb{R}^{n+1-k}\times S_{\sqrt{2(k-1)}}^{k}(0)$, where $n\geq 3$ and $2\leq k \leq n-1$.
	
	(a) If $M^{n}$ lies inside the closed product $\bar{B}^{n+1-k}_r\left(0\right)\times S_{\sqrt{2(k-1)}}^{k}(0)$ with $r=-|H_{f}|+\sqrt{H_{f}^{2}+2\left(n-k\right)}$, then $H_f\geq 0$ and $M^{n}=S^{n-k}_r\left(0\right)\times S_{\sqrt{2(k-1)}}^{k}(0)$.
	
	(b) $M^{n}$ cannot lie outside the product $B^{n+1-k}_r\left(0\right)\times S_{\sqrt{2(k-1)}}^{k}(0)$ with $r=|H_{f}|+\sqrt{H_{f}^{2}+2\left(n+1-k\right)}$.
\end{Corollary}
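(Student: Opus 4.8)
The plan is to deduce Corollary~\ref{c1} from Theorem~\ref{t3} by making the sets $D^{\pm}(c_1,c_2)$ completely explicit in the cylinder shrinking Ricci soliton and by checking there the sign hypotheses on $\mathrm{tr}_{M^{n}}\bar{\nabla}\bar{\nabla}f$. First I would record the soliton data of $\bar{M}^{n+1}=\mathbb{R}^{n+1-k}\times S^{k}_{\sqrt{2(k-1)}}(0)$ with $f(x,y)=|x|^{2}/4$: one has $\lambda=1/2$, the scalar curvature is the constant $\bar{R}\equiv k/2$, and from $|\bar{\nabla}f|^{2}+\bar{R}-2\lambda f=C$ one gets $C=k/2$, so that $\bar{R}-C\equiv 0$. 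Hence $\Gamma^{\pm}$ collapses to the constant $\tfrac14(\pm c_1+\sqrt{c_1^{2}+4c_2})^{2}$ and, writing $S^{k}$ for $S^{k}_{\sqrt{2(k-1)}}(0)$ and using that $f$ depends only on the Euclidean variable $x$,
\[
D^{\pm}(c_1,c_2)=B^{n+1-k}_{\rho_{\pm}}(0)\times S^{k},\qquad \rho_{\pm}=\pm c_1+\sqrt{c_1^{2}+4c_2},
\]
a solid generalized cylinder whose boundary is the generalized cylinder $S^{n-k}_{\rho_{\pm}}(0)\times S^{k}$; in this ambient space this is also what the level-set classification of Theorem~\ref{lema1} yields.

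Next I would verify the hypotheses on the trace. From the identity $\mathrm{tr}_{M^{n}}\bar{\nabla}\bar{\nabla}f=n\lambda-\bar{R}+\bar{R}ic(N,N)=\tfrac{n-k}{2}+\bar{R}ic(N,N)$ noted after Theorem~\ref{t3}, and from the fact that for a product metric $\bar{R}ic$ vanishes on the Euclidean factor and equals $\tfrac12\bar{g}$ on the spherical factor, one has $0\le\bar{R}ic(N,N)\le\tfrac12$, hence
\[
\tfrac{n-k}{2}\ \le\ \mathrm{tr}_{M^{n}}\bar{\nabla}\bar{\nabla}f\ \le\ \tfrac{n+1-k}{2}.
\]
So in part (a) I would invoke Theorem~\ref{t3}(a) with $a=\tfrac{n-k}{2}$ (which is $>0$ since $k\le n-1$) and in part (b) Theorem~\ref{t3}(b) with $b=\tfrac{n+1-k}{2}$ (also $>0$). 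These constants are forced by the wish to match radii: with $(c_1,c_2)=(|H_{f}|,a)$ one gets $\rho_{-}=-|H_{f}|+\sqrt{H_{f}^{2}+2(n-k)}=r$, the radius of part (a), and with $(c_1,c_2)=(|H_{f}|,b)$ one gets $\rho_{+}=|H_{f}|+\sqrt{H_{f}^{2}+2(n+1-k)}=r$, the radius of part (b) — the point being that $\tfrac{n-k}{2}$ and $\tfrac{n+1-k}{2}$ are exactly the extreme values of $\mathrm{tr}_{M^{n}}\bar{\nabla}\bar{\nabla}f$ over unit normals. I would also note that in part (a) $M^{n}$ lies in a compact region hence is compact, while in part (b) $M^{n}$, being properly immersed in the cylinder soliton, has finite weighted volume; in either case Theorem~\ref{t3} applies.

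With these identifications the hypothesis of part (a) is precisely $M^{n}\subseteq\overline{D^{-}(|H_{f}|,a)}$, so Theorem~\ref{t3}(a) gives $M^{n}\subseteq\partial D^{-}(|H_{f}|,a)=S^{n-k}_{r}(0)\times S^{k}$. Since $M^{n}$ is a connected complete $n$-dimensional hypersurface properly immersed in this compact $n$-manifold, the immersion is a covering map onto $S^{n-k}_{r}(0)\times S^{k}$, so $M^{n}=S^{n-k}_{r}(0)\times S^{k}$. A direct computation of $\vec{H}_{f}$ for this cylinder — $r$ satisfies $r^{2}+2|H_{f}|r-2(n-k)=0$, whence $\tfrac{n-k}{r}-\tfrac{r}{2}=|H_{f}|$ — then shows $H_{f}\ge 0$. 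For part (b) the hypothesis is precisely that $M^{n}$ lies outside $D^{+}(|H_{f}|,b)$, so Theorem~\ref{t3}(b) gives $M^{n}\subseteq\partial D^{+}(|H_{f}|,b)=S^{n-k}_{r}(0)\times S^{k}$ and, as above, $M^{n}$ coincides with this cylinder; but now $r$ satisfies $r^{2}-2|H_{f}|r-2(n+1-k)=0$, whence $\tfrac{r}{2}-\tfrac{n-k}{r}=|H_{f}|+\tfrac1r$, so the cylinder has weighted mean curvature of absolute value $|H_{f}|+\tfrac1r$. Equating with $|H_{f}|$ forces $1/r=0$, which is absurd; hence no such $M^{n}$ exists, which is part (b).

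The substantive input is Theorem~\ref{t3} itself, which I am taking as given. Within the corollary the only places that need care are the bookkeeping that makes the constants $a,b$ match the stated radii $r$ — which works exactly because $\tfrac{n-k}{2}$ and $\tfrac{n+1-k}{2}$ are the full range of $\mathrm{tr}_{M^{n}}\bar{\nabla}\bar{\nabla}f$ — and the orientation-sensitive computation of the weighted mean curvature of the limiting generalized cylinder, which is what produces the sign $H_{f}\ge 0$ in part (a) and the numerical contradiction in part (b).
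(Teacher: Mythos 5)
Your proposal is correct and follows essentially the same route as the paper: the same soliton data ($\lambda=1/2$, $C=\bar R=k/2$), the same trace bounds yielding $a=\tfrac{n-k}{2}$ and $b=\tfrac{n+1-k}{2}$, an application of Theorem~\ref{t3}, and then the weighted mean curvature of the limiting generalized cylinder (which the paper packages as Corollary~\ref{corintro}) to obtain the sign $H_f\geq 0$ in (a) and the numerical contradiction in (b). The one step you assert without justification --- that proper immersion gives finite weighted volume in part (b) --- is exactly the content of the paper's Lemma~\ref{remark1}.
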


In Theorem 3.5 in \cite{guang2018gap}, Guang proved that if $M^n$ is a compact CWMC hypersurface in the Gaussian shrinking Ricci soliton $\mathbb{R}^{n+1}$ satisfying $H_f\geq 0$ and 
\[
|A|^2\leq \frac{1}{2}+\frac{H_f\left(H_f+\sqrt{H_f^2+2n}\right)}{2n},
\]
then $M^n=S^n_r(0)$, where $r=-H_f+\sqrt{H_f^2+2n}$. We generalize this result in two different ways: we assume an upper bound on the mean curvature (this assumption is weaker since $H^2 \leq n |A|^2$) and we consider an arbitrary shrinking Ricci soliton ambient space. Here $A$ is the second fundamental form and $H$ is the mean curvature.
\begin{thm}\label{t7}
	Let $\bar{M}^{n+1}$ be a shrinking Ricci soliton and let $M^n$ be a compact CWMC hypersurface immersed in $\bar{M}^{n+1}$. Suppose that $tr_{M^{n}}\bar{\nabla}\bar{\nabla}f\geq a$ for some $a>0$. Assume that $f(p)=\sup_{M^n}f$ is a regular value of $f$. If
	\[
	H\leq \frac{2H_f-|H_f|+\sqrt{H_f^2+4a}}{2},
	\]
	then $M^n\subseteq\partial D^-(|H_f|,a)$.
\end{thm}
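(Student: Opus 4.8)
The plan is to reduce the statement to Theorem~\ref{t3}(a). A compact hypersurface automatically has finite weighted volume, so it suffices to show that $M^{n}$ lies in $\overline{D^{-}(|H_{f}|,a)}$; using the soliton identity $|\bar\nabla f|^{2}=2\lambda f+C-\bar R$ and writing $\beta=\tfrac12\big(-|H_{f}|+\sqrt{H_{f}^{2}+4a}\big)>0$, the inequality $f\le\Gamma^{-}$ defining $\overline{D^{-}(|H_{f}|,a)}$ is exactly $|\bar\nabla f|^{2}\le\beta^{2}$; once this holds everywhere on $M^{n}$, Theorem~\ref{t3}(a) yields $M^{n}\subseteq\partial D^{-}(|H_{f}|,a)$.

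The basic tool is the standard identity for the restriction of the potential function,
\[
\Delta_{M}(f|_{M})=tr_{M^{n}}\bar\nabla\bar\nabla f+\langle\bar\nabla f,\vec{H}\rangle=tr_{M^{n}}\bar\nabla\bar\nabla f-H\langle\bar\nabla f,N\rangle=tr_{M^{n}}\bar\nabla\bar\nabla f-u^{2}-H_{f}u,
\]
where $u=\langle\bar\nabla f,N\rangle$ and I have used the defining relation $u=H-H_{f}$ (Section~\ref{s2}). In these terms the hypothesis on $H$ is precisely $u\le\beta$ on $M^{n}$, and one checks that $\beta$ never exceeds the larger root of the quadratic $t\mapsto t^{2}+H_{f}t-a$.

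First I would evaluate the identity at a point $p$ realizing $\sup_{M}f$. Since $M^{n}$ is closed, $p$ is an interior maximum, so $\nabla_{M}f(p)=0$ and $\Delta_{M}(f|_{M})(p)\le0$; since $f(p)$ is a regular value, $\bar\nabla f(p)\neq0$, hence $\bar\nabla f(p)$ is a nonzero multiple of $N(p)$ and $u(p)^{2}=|\bar\nabla f(p)|^{2}>0$. Together with $tr_{M^{n}}\bar\nabla\bar\nabla f\ge a$ this gives $u(p)^{2}+H_{f}u(p)\ge a$, and combined with $u(p)\le\beta$ a short discussion of the quadratic forces $u(p)^{2}\ge\beta^{2}$, i.e. $|\bar\nabla f(p)|^{2}\ge\beta^{2}$ and hence $f(p)=\sup_{M}f\ge\Gamma^{-}(p)$. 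It then remains to promote this to the global bound $|\bar\nabla f|^{2}\le\beta^{2}$ on $M^{n}$. For this I would argue by contradiction: if $f-\Gamma^{-}=\tfrac{1}{2\lambda}\big(|\bar\nabla f|^{2}-\beta^{2}\big)$ were positive somewhere, choose $q$ maximizing it on $M^{n}$, note $\bar\nabla f(q)\neq0$, and compute $L_{f}(f-\Gamma^{-})$ at $q$ from the identity above, the pointwise bound $u\le\beta$, and the soliton identities $\bar\nabla\bar\nabla f=\lambda\bar g-\bar{R}ic$, $\bar\nabla\bar R=2\bar{R}ic(\bar\nabla f,\cdot)$, $\bar\Delta\bar R=\langle\bar\nabla f,\bar\nabla\bar R\rangle+2\lambda\bar R-2|\bar{R}ic|^{2}$, aiming to show $L_{f}(f-\Gamma^{-})(q)>0$ — impossible at an interior maximum. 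Together with $f(p)\ge\Gamma^{-}(p)$ this forces $\max_{M}(f-\Gamma^{-})=0$, so $M^{n}\subseteq\overline{D^{-}(|H_{f}|,a)}$, and Theorem~\ref{t3}(a) concludes.

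I expect the hard part to be exactly this last global step. Computing $L_{f}(f-\Gamma^{-})$ (equivalently the drift Laplacian of $|\bar\nabla f|^{2}|_{M}$) produces ambient curvature terms such as $\bar{R}ic(N,N)$ and $\bar{R}ic(\bar\nabla f,N)$, and one must check that the hypotheses — note $tr_{M^{n}}\bar\nabla\bar\nabla f=n\lambda-\bar R+\bar{R}ic(N,N)\ge a$, together with $u=H-H_{f}\le\beta$ — are precisely what controls their sign, while simultaneously selecting the correct branch (the larger root) of $t^{2}+H_{f}t-a$ at the extremal points; the regular-value assumption is what keeps these pointwise identities nondegenerate. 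For the Gaussian soliton one has $\bar R\equiv0$ and $|\bar\nabla f|^{2}=f$, so $f-\Gamma^{-}$ is affine in $f|_{M}$ and the global step collapses to the single computation at $p$; the additional content here is handling $f$ and $|\bar\nabla f|^{2}$ separately on a general shrinking soliton.
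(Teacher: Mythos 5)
Your overall strategy --- reduce to Theorem \ref{t3}(a) by showing $M^n\subseteq\overline{D^-(|H_f|,a)}$, i.e.\ $|\bar{\nabla}f|\leq\beta:=\tfrac{1}{2}\left(-|H_f|+\sqrt{H_f^2+4a}\right)$ on $M^n$ --- is exactly the paper's, and your identity $\Delta f=tr_{M^{n}}\bar{\nabla}\bar{\nabla}f-u^2-H_fu$ with $u=\langle\bar{\nabla}f,N\rangle=H-H_f$ is correct. The problem is what you extract at the maximum point $p$ of $f|_{M}$. From $\Delta f(p)\leq 0$ you get $u(p)^2+H_fu(p)\geq a$, and combined with $u(p)\leq\beta$ your quadratic discussion indeed yields $u(p)^2\geq\beta^2$, i.e.\ $f(p)\geq\Gamma^-(p)$ --- but this is the \emph{reverse} of the inequality you need ($f\leq\Gamma^-$ on $M^n$), so the entire burden of the proof falls on your ``global step,'' which you only sketch. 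That step is not salvageable as described: $f-\Gamma^-$ is, up to the factor $\tfrac{1}{2\lambda}$, the restriction of $|\bar{\nabla}f|^2-\beta^2$ to $M^n$, and computing its drift Laplacian on $M^n$ brings in terms like $\bar{\nabla}\bar{\nabla}\bar{R}(N,N)$ that the hypotheses do not control; moreover, even the clean differential inequality one does get (e.g.\ $\Delta_f f\geq\beta^2-|\bar{\nabla}f|^2$ when $H_f\geq0$ and $u\leq\beta$) again only produces $\sup_M f\geq\Gamma^-$ at a maximum, never the upper bound.

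The missing idea is more elementary and uses no second-order information at $p$ at all. At the maximum of $f|_M$ the tangential gradient vanishes, so $\bar{\nabla}f(p)$ is a multiple of $N(p)$ --- you note this --- but the paper also pins down the \emph{sign}: $\bar{\nabla}f(p)$ and $N(p)$ point in the same direction, so $u(p)=\langle\bar{\nabla}f(p),N(p)\rangle=+|\bar{\nabla}f(p)|$. Then the hypothesis $u\leq\beta$ reads $|\bar{\nabla}f|(p)=H(p)-H_f\leq\beta$ directly, hence $\sup_{M}f=f(p)\leq\Gamma^-$ via $|\bar{\nabla}f|^{2}=2\lambda f-\bar{R}+C$, and Theorem \ref{t3}(a) finishes. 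Your argument never uses the sign of $u(p)$: your case analysis still allows $u(p)\leq\tfrac{1}{2}\left(-H_f-\sqrt{H_f^2+4a}\right)<0$, which is precisely the branch that the alignment of $\bar{\nabla}f(p)$ with $N(p)$ rules out, and that is why you end up with the wrong-way inequality.
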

\begin{rem}
    The theorem above generalizes Theorem 3.5 in \cite{guang2018gap}. Indeed, when the ambient space is the Gaussian shrinking Ricci soliton (see Example \ref{ex1}) we have $tr_{M^{n}}\bar{\nabla}\bar{\nabla}f= \frac{n}{2}$ (so $a=\frac{n}{2}$). Assuming that $H_f\geq 0$ and
\[
|A|^2\leq \frac{1}{2}+\frac{H_f\left(H_f+\sqrt{H_f^2+2n}\right)}{2n}= \frac{\left(H_f+\sqrt{H_f^2+2n}\right)^2}{4n}
\]
and using the fact that $\frac{1}{n}H^2\leq |A|^2$ we have
\[
	H\leq \frac{2H_f-|H_f|+\sqrt{H_f^2+2n}}{2}.
\]
Now using Theorem \ref{t7} we conclude that  $M^n = S^n_r(0)$.
\end{rem}
In particular, we obtain a new result for the cylinder shrinking Ricci soliton ambient space.
\begin{Corollary}\label{t10}
	Let $M^n$ be a compact CWMC hypersurface immersed in the cylinder shrinking Ricci soliton $\mathbb{R}^{n-k+1}\times S^k_{\sqrt{2(k-1)}}(0)$, where $n\geq 3$ and $2\leq k \leq n-1$. If $H_f\geq 0$ and
	\[
	H\leq \frac{H_f+\sqrt{H_f^2+2(n-k)}}{2},
	\]
	then $M^n=S^{n-k}_{r}(0)\times S^k_{\sqrt{2(k-1)}}(0)$, where $r= -H_f+\sqrt{H_f^2+2(n-k)}$.	
\end{Corollary}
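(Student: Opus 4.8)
The plan is to deduce Corollary \ref{t10} directly from Theorem \ref{t7} applied to the cylinder shrinking Ricci soliton $\bar M^{n+1}=\mathbb{R}^{n-k+1}\times S^k_{\sqrt{2(k-1)}}(0)$, whose soliton data are well known: $f(x,y)=\frac{|x|^2}{4}$, $\lambda=\frac12$, $\bar R=\frac{k(k-1)}{2(k-1)}=\frac k2$ (constant), and the constant $C$ in $|\bar\nabla f|^2+\bar R-2\lambda f=C$ works out by evaluating on the axis $S^k$. First I would compute $tr_{M^n}\bar\nabla\bar\nabla f$. Since $\bar\nabla\bar\nabla f$ is, on the $\mathbb{R}^{n-k+1}$ factor, $\frac12$ times the Euclidean metric and vanishes on the $S^k$ factor, its full trace on $\bar M$ is $\frac{n-k+1}{2}$, and restricted to the hypersurface $M^n$ one has $tr_{M^n}\bar\nabla\bar\nabla f=\frac{n-k+1}{2}-\frac12\langle\bar\nabla^{\mathbb{R}}\text{-component of }N,N\rangle$; in any case $tr_{M^n}\bar\nabla\bar\nabla f\ge \frac{n-k}{2}$ because the normal direction can absorb at most one unit of the $\frac12$-eigenvalues. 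Hence I take $a=\frac{n-k}{2}>0$ (using $k\le n-1$), so that $4a=2(n-k)$.

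Next I would feed this into Theorem \ref{t7}. The hypothesis $H_f\ge 0$ gives $|H_f|=H_f$, so the bound in Theorem \ref{t7} becomes
\[
H\le \frac{2H_f-H_f+\sqrt{H_f^2+2(n-k)}}{2}=\frac{H_f+\sqrt{H_f^2+2(n-k)}}{2},
\]
which is exactly the assumption of the Corollary. We also need $f(p)=\sup_{M^n}f$ to be a regular value of $f$: since $M^n$ is compact and $f=\frac{|x|^2}{4}$, the supremum is attained, and at a maximum point $p$ the gradient $\bar\nabla f=\frac12(x,0)$ must be either zero or normal to $M^n$; I would argue that the case $\bar\nabla f(p)=0$, i.e.\ $x(p)=0$, forces $M^n$ to lie in $\{0\}\times S^k_{\sqrt{2(k-1)}}(0)$ which is $k$-dimensional, contradicting $\dim M^n=n>k$ — so $\bar\nabla f(p)\ne 0$ and $p$ is a regular point, making $f(p)$ a regular value after possibly perturbing; more carefully, since $f$ restricted to $M$ has a nondegenerate-type maximum one checks $p$ is a regular point of $f:\bar M\to\mathbb{R}$, which is all that is needed.

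Then Theorem \ref{t7} yields $M^n\subseteq\partial D^-(|H_f|,a)=\partial D^-(H_f,\tfrac{n-k}{2})$. It remains to identify this level set explicitly. Writing out $\Gamma^-$ with $\lambda=\frac12$, $c_1=H_f$, $c_2=\frac{n-k}{2}$, $\bar R=\frac k2$, and the computed value of $C$, the equation $f(x)=\Gamma^-$ reduces to $\frac{|x|^2}{4}=\frac14\big(-H_f+\sqrt{H_f^2+2(n-k)}\big)^2\cdot\frac14\cdot 4$, i.e.\ $|x|=r$ with $r=-H_f+\sqrt{H_f^2+2(n-k)}$; thus $\partial D^-$ is exactly the generalized cylinder $S^{n-k}_r(0)\times S^k_{\sqrt{2(k-1)}}(0)$ (this is the same level-set computation underlying Corollary \ref{c1}(a), and can be cited from there or from Theorem \ref{lema1}). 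Since $M^n$ is contained in this smooth connected $n$-dimensional submanifold and is itself an immersed $n$-dimensional hypersurface, $M^n=S^{n-k}_r(0)\times S^k_{\sqrt{2(k-1)}}(0)$, which completes the proof.

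The main obstacle I expect is purely bookkeeping: correctly pinning down the constants $C$ and $\bar R$ for the cylinder soliton and then verifying that $\partial D^-(H_f,\tfrac{n-k}{2})$ is precisely the round cylinder of the stated radius $r$, so that the chain of inequalities in Theorem \ref{t7} matches the Corollary's hypothesis on the nose. A secondary, minor point is the regular-value hypothesis on $\sup_{M^n}f$, which requires ruling out that the maximum of $|x|^2$ on $M^n$ occurs at $x=0$ — immediate from the dimension count $n>k$.
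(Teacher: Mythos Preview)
Your approach is essentially the paper's: set $a=\frac{n-k}{2}$, verify $tr_{M^n}\bar\nabla\bar\nabla f\ge a$ exactly as in the proof of Corollary~\ref{c1}(a), use $H_f\ge 0$ so that the bound in Theorem~\ref{t7} becomes the stated hypothesis on $H$, and then identify $\partial D^-(H_f,\tfrac{n-k}{2})$ with $S^{n-k}_r(0)\times S^k_{\sqrt{2(k-1)}}(0)$ for $r=-H_f+\sqrt{H_f^2+2(n-k)}$. Your treatment is in fact more careful than the paper's in that you verify the regular-value hypothesis of Theorem~\ref{t7} (the dimension count ruling out $\sup_{M}f=0$ is the right argument; drop the ``after possibly perturbing'' remark, since $f(p)>0$ already makes $f(p)$ a regular value of $f(x,y)=|x|^2/4$), and you should clean up the garbled display in the $\Gamma^-$ computation, but otherwise this is the intended proof.
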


Note that the assumption on the upper bound of $H$ is sharp because equality holds for $M^n=S^{n-k}_{r}(0)\times S^k_{\sqrt{2(k-1)}}(0)$, where $r= -H_f+\sqrt{H_f^2+2(n-k)}$.

This is a new result even for $f$-minimal hypersurfaces.
\begin{Corollary}
	Let $M^n$ be a compact $f$-minimal hypersurface immersed in the cylinder shrinking Ricci soliton $\mathbb{R}^{n-k+1}\times S^k_{\sqrt{2(k-1)}}(0)$, where $n\geq 3$ and $2\leq k \leq n-1$. If
	\[
	H\leq \frac{\sqrt{2(n-k)}}{2},
	\]
	then $M^n=S^{n-k}_{r}(0)\times S^k_{\sqrt{2(k-1)}}(0)$, where $r=\sqrt{2(n-k)}$.	
\end{Corollary}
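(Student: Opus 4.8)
The plan is to deduce this corollary from Corollary~\ref{t10} by specializing to the case $H_f=0$. First I would recall that, by definition, an $f$-minimal hypersurface is one whose weighted mean curvature vanishes, i.e.\ $H_f=0$; in particular the hypothesis $H_f\geq 0$ in Corollary~\ref{t10} is automatically satisfied, and the dimensional restrictions $n\geq 3$, $2\leq k\leq n-1$ are exactly those assumed here.

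Next I would substitute $H_f=0$ into the statement of Corollary~\ref{t10}. The curvature bound $H\leq\frac{H_f+\sqrt{H_f^2+2(n-k)}}{2}$ collapses to $H\leq\frac{\sqrt{2(n-k)}}{2}$, which is precisely the hypothesis of the present corollary, and the radius $r=-H_f+\sqrt{H_f^2+2(n-k)}$ becomes $r=\sqrt{2(n-k)}$. Applying Corollary~\ref{t10} then yields $M^n=S^{n-k}_{\sqrt{2(n-k)}}(0)\times S^k_{\sqrt{2(k-1)}}(0)$, as claimed.

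There is essentially no obstacle here, since all the content has already been established in Corollary~\ref{t10} (ultimately in Theorem~\ref{t7} together with the level-set classification of Theorem~\ref{lema1}); the only thing to verify is the purely formal reduction above. If instead one preferred to argue directly from Theorem~\ref{t7}, one would check that for the cylinder soliton one may take $a=\frac{n-k}{2}$: using the identity $tr_{M^{n}}\bar{\nabla}\bar{\nabla}f=n\lambda-\bar{R}+\bar{R}ic(N,N)$ with $\lambda=\tfrac12$, scalar curvature $\bar{R}=\tfrac{k}{2}$, and $\bar{R}ic(N,N)=\tfrac12|N^S|^2\geq 0$ (where $N^S$ denotes the component of $N$ tangent to the spherical factor), one gets $tr_{M^{n}}\bar{\nabla}\bar{\nabla}f=\frac{n-k}{2}+\tfrac12|N^S|^2\geq\frac{n-k}{2}$. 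With $H_f=0$ the bound in Theorem~\ref{t7} becomes $H\leq\frac{\sqrt{4a}}{2}=\frac{\sqrt{2(n-k)}}{2}$, so Theorem~\ref{t7} forces $M^n\subseteq\partial D^-(0,\tfrac{n-k}{2})$, and Theorem~\ref{lema1} identifies this level set as the stated generalized cylinder $S^{n-k}_{\sqrt{2(n-k)}}(0)\times S^k_{\sqrt{2(k-1)}}(0)$.
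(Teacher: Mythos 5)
Your proposal is correct and matches the paper's (implicit) argument: the corollary is exactly Corollary~\ref{t10} specialized to $H_f=0$, which is the definition of $f$-minimal, and your substitutions of the curvature bound and the radius are right. Your alternative direct check via Theorem~\ref{t7}, using $tr_{M^{n}}\bar{\nabla}\bar{\nabla}f=\frac{n-k}{2}+\frac{1}{2}|N^{S}|^{2}\geq\frac{n-k}{2}$, is also consistent with how the paper establishes Corollary~\ref{t10} itself.
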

In Theorem 1.2 in \cite{cheng2016rigidity}, Q.M.Cheng-Ogata-Wei proved that if $M^n$ is a complete CWMC hypersurface in the Gaussian shrinking Ricci soliton $\mathbb{R}^{n+1}$ with polynomial volume growth and satisfying
\[
\left(H-\frac{H_f}{2}\right)^2\geq \frac{H_f^2}{4}+\frac{n}{2},
\]
then $M^n=S^n_r(0)$, where $r=-H_f+\sqrt{H_f^2+2n}$. We generalize this result to any ambient space which is a smooth metric measure space.
\begin{thm}\label{t9}
Let $(\bar{M}^{n+1}, \bar{g}, f)$ be a smooth measure metric space and let $M^n$ be a complete hypersurface immersed in $\bar{M}^{n+1}$ with finite weighted volume. Suppose that $\text{tr}_{M}\bar{\nabla}\bar{\nabla}f\leq b$ for some $b>0$. If
\[
H\geq \frac{H_f+\sqrt{H_f^2+4b}}{2},
\]
then $M^n\subseteq f^{-1}(\gamma)$ for some $\gamma\in \mathbb{R}$.
\end{thm}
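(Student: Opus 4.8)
The plan is to restrict $f$ to $M$ and show that this function is locally constant by combining a pointwise differential inequality coming from the two hypotheses with an integration-by-parts argument that exploits the finite weighted volume. Write $u:=f|_{M}$. From $\vec{H}_{f}=\vec{H}+(\bar{\nabla}f)^{\perp}$ together with $\vec{H}_{f}=-H_{f}N$ and $\vec{H}=-HN$ one reads off $\langle\bar{\nabla}f,N\rangle=H-H_{f}$. Also, for any function $\phi$ on $\bar{M}$ one has the standard formula $\Delta_{M}(\phi|_{M})=\operatorname{tr}_{M}\bar{\nabla}\bar{\nabla}\phi+\langle\vec{H},\bar{\nabla}\phi\rangle$; applied to $\phi=f$ this gives
\[
\Delta_{M}u=\operatorname{tr}_{M}\bar{\nabla}\bar{\nabla}f-H\langle\bar{\nabla}f,N\rangle=\operatorname{tr}_{M}\bar{\nabla}\bar{\nabla}f-H(H-H_{f}).
\]

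Next I would note that $\tfrac{H_{f}+\sqrt{H_{f}^{2}+4b}}{2}$ is exactly the larger root of $t^{2}-H_{f}t-b=0$, and that since $b>0$ this root is positive (as $\sqrt{H_{f}^{2}+4b}>|H_{f}|\geq -H_{f}$). Hence the hypothesis $H\geq\tfrac{H_{f}+\sqrt{H_{f}^{2}+4b}}{2}$ forces $H>0$, and since the upward parabola $t\mapsto t^{2}-H_{f}t-b$ is nonnegative to the right of its larger root, $H^{2}-H_{f}H-b\geq 0$, i.e. $H(H-H_{f})\geq b$ pointwise on $M$. Combining this with $\operatorname{tr}_{M}\bar{\nabla}\bar{\nabla}f\leq b$ and the displayed identity yields $\Delta_{M}u\leq 0$, so for the drift Laplacian $\Delta_{f}u:=\Delta_{M}u-\langle\nabla^{M}u,\nabla^{M}u\rangle$ we obtain the key inequality
\[
\Delta_{f}u\leq -|\nabla^{M}u|^{2}.
\]

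Finally I would run the standard cutoff argument on the weighted manifold $(M,e^{-f}dV_{M})$. Fix $p\in M$ and choose Lipschitz cutoffs $\eta_{R}$ equal to $1$ on the intrinsic ball $B_{R}(p)$, supported in $B_{2R}(p)$, with $|\nabla^{M}\eta_{R}|\leq 2/R$; completeness of $M$ guarantees these exist. Using $\Delta_{f}u\,e^{-f}=\operatorname{div}_{M}(e^{-f}\nabla^{M}u)$ and the divergence theorem (all quantities compactly supported), multiply the last inequality by $\eta_{R}^{2}$ and integrate to get $\int_{M}\eta_{R}^{2}|\nabla^{M}u|^{2}e^{-f}\leq 2\int_{M}\eta_{R}|\nabla^{M}\eta_{R}||\nabla^{M}u|e^{-f}$, and Cauchy--Schwarz then gives $\int_{M}\eta_{R}^{2}|\nabla^{M}u|^{2}e^{-f}\leq 4\int_{M}|\nabla^{M}\eta_{R}|^{2}e^{-f}\leq \tfrac{16}{R^{2}}\operatorname{Vol}_{f}(M)$. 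Since $\operatorname{Vol}_{f}(M)<\infty$, letting $R\to\infty$ (with monotone convergence on the left) forces $\int_{M}|\nabla^{M}u|^{2}e^{-f}=0$, hence $\nabla^{M}u\equiv 0$; thus $u=f|_{M}$ is constant on $M$ (on each connected component), i.e. $M\subseteq f^{-1}(\gamma)$.

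The computation itself is elementary, so the step I would single out is not a genuine obstacle but rather the point that needs care: the legitimacy of the integration by parts on the noncompact manifold $M$, which is precisely what the cutoff functions together with the hypothesis of finite weighted volume are there to provide (so that no growth condition on $f$ or on the volume of geodesic balls is required). It is also worth remarking that the hypotheses enter only through the single pointwise estimate $\Delta_{f}u\leq -|\nabla^{M}u|^{2}$, which is why neither a gradient Ricci soliton structure on $\bar{M}$ nor constancy of $H_{f}$ plays any role in this theorem.
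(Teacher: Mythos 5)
Your proposal is correct and follows essentially the same route as the paper: the identity $\Delta (f|_M)=\operatorname{tr}_M\bar{\nabla}\bar{\nabla}f+(H_f-H)H$, the observation that the hypothesis on $H$ forces $H(H-H_f)\ge b$ and hence $\Delta_f f\le -|\nabla f|^2$, and then the cutoff integration-by-parts argument using finite weighted volume. The only cosmetic differences are that you verify the sign of $b+(H_f-H)H$ via the roots of the quadratic rather than by completing the square, and you absorb the cross term by Cauchy--Schwarz and division rather than by Young's inequality with $\varepsilon$.
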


In particular, we recover the result of Q.M.Cheng-Ogata-Wei (see Corollary \ref{corolult}) and we obtain a new result for the cylinder shrinking Ricci soliton ambient space.

\begin{Corollary}\label{corin3}
	Let $M^n$ be a complete CWMC hypersurface properly immersed in the cylinder shrinking Ricci soliton $\mathbb{R}^{n+1-k}\times S^k_{\sqrt{2(k-1)}}(0)$, where $n\geq 3$ and $2\leq k \leq n-1$. Then
	\[
	\inf H< \frac{H_f+\sqrt{H_f^2+2(n+1-k)}}{2}.
	\]	
\end{Corollary}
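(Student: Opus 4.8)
The plan is to derive Corollary \ref{corin3} directly from Theorem \ref{t9} applied to the cylinder shrinking Ricci soliton, using a contradiction argument. First I would recall that in the cylinder shrinking Ricci soliton $\bar{M}^{n+1}=\mathbb{R}^{n+1-k}\times S^k_{\sqrt{2(k-1)}}(0)$ with $f(x,y)=\frac{|x|^2}{4}$ one has $\lambda=\frac{1}{2}$, the scalar curvature $\bar{R}=\frac{k(k-1)}{2(k-1)}=\frac{k}{2}$ is constant, and the Ricci tensor vanishes on the $\mathbb{R}^{n+1-k}$ factor while equalling $\frac{1}{2}\bar{g}$ on the spherical factor. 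Consequently, for any unit normal $N$ we get $\bar{R}ic(N,N)\le \frac{1}{2}$, so from the identity $\text{tr}_{M}\bar{\nabla}\bar{\nabla}f=n\lambda-\bar{R}+\bar{R}ic(N,N)$ recalled after Theorem \ref{t3} we obtain the uniform upper bound $\text{tr}_{M}\bar{\nabla}\bar{\nabla}f\le \frac{n}{2}-\frac{k}{2}+\frac{1}{2}=\frac{n+1-k}{2}=:b$, which is positive since $k\le n-1$.

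Next, suppose for contradiction that $\inf H\ge \frac{H_f+\sqrt{H_f^2+4b}}{2}=\frac{H_f+\sqrt{H_f^2+2(n+1-k)}}{2}$; then $H\ge \frac{H_f+\sqrt{H_f^2+4b}}{2}$ everywhere on $M^n$. Since $M^n$ is properly immersed in the cylinder shrinking Ricci soliton, it has finite weighted volume (this is a standard fact: properness plus the Gaussian weight in the $\mathbb{R}^{n+1-k}$ directions forces $\int_{M}e^{-f}<\infty$; I would cite the relevant statement from \cite{cheng2015stability} or \cite{barbosa2020lambda}). Therefore the hypotheses of Theorem \ref{t9} are met, and we conclude $M^n\subseteq f^{-1}(\gamma)$ for some $\gamma\in\mathbb{R}$, i.e. $M^n$ lies in a level set $\{|x|^2=4\gamma\}$ of the potential.

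If $\gamma=0$ then $M^n\subseteq \{0\}\times S^k_{\sqrt{2(k-1)}}(0)$, which has dimension $k<n$, contradicting that $M^n$ is an $n$-dimensional immersed hypersurface (here I use $k\le n-1$). If $\gamma>0$, then $f^{-1}(\gamma)=S^{n-k}_{2\sqrt{\gamma}}(0)\times S^k_{\sqrt{2(k-1)}}(0)$ is a smooth hypersurface and, $M^n$ being complete and immersed inside it with the same dimension, $M^n$ must be (an open-and-closed piece of, hence all of) this product — in particular $M^n=S^{n-k}_{r_0}(0)\times S^k_{\sqrt{2(k-1)}}(0)$ with $r_0=2\sqrt{\gamma}$. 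A direct computation of the weighted mean curvature of such a product cylinder (the unit normal is radial in the $\mathbb{R}^{n+1-k}$ factor, so $H=\frac{n-k}{r_0}$ and $H_f=H+\langle\bar{\nabla}f,N\rangle=\frac{n-k}{r_0}-\frac{r_0}{2}$) shows that $H=\frac{n-k}{r_0}$ and $\frac{H_f+\sqrt{H_f^2+2(n-k)}}{2}$ coincide only when $r_0=-H_f+\sqrt{H_f^2+2(n-k)}$, and in any case the chain $H=\frac{H_f+\sqrt{H_f^2+2(n-k)}}{2}<\frac{H_f+\sqrt{H_f^2+2(n+1-k)}}{2}\le \inf H=H$ gives a strict inequality $H<H$, the desired contradiction. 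The main obstacle I anticipate is making the step ``properly immersed hypersurface contained in $f^{-1}(\gamma)$ must equal the whole level set'' fully rigorous — one needs that the immersion is a covering onto its image and that the connected level set cylinder is simply connected (true for $n-k\ge 2$; the boundary case $n-k=1$ where $S^1$ is not simply connected needs the separate observation that a complete immersed curve-cross-sphere of full dimension still fills the cylinder), together with the finite-weighted-volume input for properly immersed submanifolds of the cylinder soliton.
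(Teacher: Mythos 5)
Your proposal is correct and follows essentially the same route as the paper: assume $\inf H\geq \frac{H_f+\sqrt{H_f^2+4b}}{2}$ with $b=\frac{n+1-k}{2}$, invoke Theorem \ref{t9} (via finite weighted volume from properness, which is the paper's Lemma \ref{remark1}) to trap $M^n$ in a level set, identify that level set as $S^{n-k}_{r}(0)\times S^k_{\sqrt{2(k-1)}}(0)$, and read off $H=\frac{H_f+\sqrt{H_f^2+2(n-k)}}{2}$ to get the strict contradiction. The only differences are cosmetic: you compute the cylinder's $H$ and $H_f$ directly and treat the degenerate case $\gamma=0$ and the ``contained in a connected level set implies equal to it'' step explicitly, where the paper instead cites Corollary \ref{corolult}, Theorem \ref{lema1} and Corollary \ref{t10}.
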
 
This is a new result even for $f$-minimal hypersurfaces.
\begin{Corollary}
	Let $M^n$ be a complete $f$-minimal hypersurface properly immersed in the cylinder shrinking Ricci soliton $\mathbb{R}^{n+1-k}\times S^k_{\sqrt{2(k-1)}}(0)$, where $n\geq 3$ and $2\leq k \leq n-1$. Then
	\[
	\inf H< \frac{\sqrt{2(n+1-k)}}{2}.
	\]	
\end{Corollary}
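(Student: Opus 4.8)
The plan is to obtain this statement as the special case $H_f\equiv 0$ of Corollary \ref{corin3}. By the definition recalled in the introduction, an $f$-minimal hypersurface is precisely a CWMC hypersurface whose weighted mean curvature $H_f$ vanishes identically, and all the remaining hypotheses here ($M^n$ complete, properly immersed in $\mathbb{R}^{n+1-k}\times S^k_{\sqrt{2(k-1)}}(0)$, with $n\geq 3$ and $2\leq k\leq n-1$) are exactly those of Corollary \ref{corin3}. Hence I would simply apply Corollary \ref{corin3} and put $H_f=0$ in its conclusion
\[
\inf H<\frac{H_f+\sqrt{H_f^2+2(n+1-k)}}{2},
\]
which collapses to $\inf H<\frac{\sqrt{2(n+1-k)}}{2}$, as desired. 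No sign condition on $H_f$ enters (Corollary \ref{corin3}, unlike Corollary \ref{t10}, imposes none), so there is nothing further to check.

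Because this is a pure specialization there is no genuine obstacle; the only thing to be careful about is matching hypotheses, and in particular that ``$f$-minimal'' means ``$H_f\equiv 0$'' and not ``$H\equiv 0$'', which is the convention fixed at the start of the paper. For completeness one could avoid citing Corollary \ref{corin3} and argue directly from Theorem \ref{t9}: on the cylinder shrinking Ricci soliton one has $\lambda=\tfrac12$ and $\bar{R}ic\leq\tfrac12\bar g$, so the identity $\text{tr}_{M}\bar{\nabla}\bar{\nabla}f=n\lambda-\bar R+\bar{R}ic(N,N)$ gives $\text{tr}_{M}\bar{\nabla}\bar{\nabla}f\leq\frac{n+1-k}{2}$; a properly immersed hypersurface has finite weighted volume, so taking $b=\frac{n+1-k}{2}>0$ and assuming for contradiction that $\inf H\geq\frac{\sqrt{2(n+1-k)}}{2}=\frac{\sqrt{4b}}{2}$, Theorem \ref{t9} forces $M^n$ into a single level set $f^{-1}(\gamma)$. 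Such a level set is the compact product $S^{n-k}_{2\sqrt\gamma}(0)\times S^k_{\sqrt{2(k-1)}}(0)$, so it must equal $M^n$; being $f$-minimal it must have radius $2\sqrt\gamma=\sqrt{2(n-k)}$, whence $H=\frac{\sqrt{2(n-k)}}{2}<\frac{\sqrt{2(n+1-k)}}{2}$, a contradiction. Either way the proof is immediate once Corollary \ref{corin3} (equivalently, Theorem \ref{t9}) is in hand.
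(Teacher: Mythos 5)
Your proposal is correct and matches the paper's (implicit) argument exactly: the paper offers no separate proof of this corollary because it is precisely the case $H_f\equiv 0$ of Corollary \ref{corin3}, which is what you do. Your alternative direct derivation from Theorem \ref{t9} simply retraces the proof of Corollary \ref{corin3} and is also sound.
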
 

In the second part of this paper, we obtain some classification theorems for the Gaussian shrinking Ricci soliton ambient space. Le-Sesum \cite{le2011blow} proved that if $M^n$ is a complete self-shrinker in the Gaussian shrinking Ricci soliton $\mathbb{R}^{n+1}$ with polynomial volume growth and satisfying $|A|^2<1/2$, then $M^n$ is a hyperplane passing through the origin. Later, Cao-Li \cite{cao2013gap} extended this result to arbitrary codimension by showing that if $|A|^2\leq 1/2$, then $M^n$ is a generalized cylinder. Later, Guang \cite{guang2018gap} extended this result to CWMC hypersurfaces by assuming polynomial volume growth and a condition on the norm of the second fundamental form. Many of the classification results for CWMC hypersurfaces assume that the hypersurface has polynomial volume growth in order to use integration techniques. Recently, there has been some papers trying to avoid this assumption by using the Omori-Yau maximum principles (\cite{cheng2016rigidity}, \cite{cheng2015complete}, \cite{pigola2014complete}, etc). We can replace the assumption of polynomial volume growth in Guang's result by an assumption on the integral of the second fundamental form.
\begin{thm}\label{t4}
	Let $M^n$ be a complete embedded CWMC hypersurface in the Gaussian shrinking Ricci soliton $\mathbb{R}^{n+1}$. If $A\in L_f^q(M)$ for some $q\geq1$ and
	\[
	|A|\leq\frac{\sqrt{H_f^2+2}-|H_f|}{2},
	\]
	then either $M^n$ is a hyperplane or it is a generalized cylinder $S^k_r(0)\times \mathbb{R}^{n-k}$, where $1\leq k\leq n$.
\end{thm}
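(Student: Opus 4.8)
\emph{Strategy.} The plan is to adapt the weighted Simons-inequality argument of Le--Sesum and Cao--Li for self-shrinkers to the CWMC setting, replacing the customary polynomial-volume-growth hypothesis by the integrability assumption $A\in L^q_f(M)$. Throughout I write $\Delta_f=\Delta-\tfrac12\langle x^{\top},\nabla\,\cdot\,\rangle$ for the drift Laplacian on $M$ (here $x^{\top}$ is the tangential part of the position vector and $f(x)=|x|^2/4$), and $\lambda=H_f$ for the constant weighted mean curvature.

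\emph{Step 1: a weighted Simons inequality and subharmonicity.} From the Codazzi and Gauss equations one obtains, for a CWMC hypersurface in the Gaussian soliton, the Simons-type identity
\[
\Delta_f A_{ij}=\tfrac12 A_{ij}-|A|^2A_{ij}+\lambda\,(A^2)_{ij},\qquad (A^2)_{ij}=A_{ik}A_{kj},
\]
which can also be checked directly on the model hyperplanes, spheres and cylinders. Contracting with $A$ and using $|\mathrm{tr}(A^3)|\le|A|^3$ gives
\[
\tfrac12\Delta_f|A|^2=|\nabla A|^2+\tfrac12|A|^2-|A|^4+\lambda\,\mathrm{tr}(A^3)\ \ge\ |\nabla A|^2+|A|^2\!\left(\tfrac12-|A|^2-|H_f|\,|A|\right).
\]
The key observation is that the hypothesis $|A|\le\tfrac12\big(\sqrt{H_f^2+2}-|H_f|\big)$ is, after squaring (both sides of $2|A|+|H_f|\le\sqrt{H_f^2+2}$ being nonnegative), \emph{exactly equivalent} to $|A|^2+|H_f|\,|A|\le\tfrac12$. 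Hence the zeroth-order term above is nonnegative, so $\Delta_f|A|^2\ge 2|\nabla A|^2\ge0$; in particular $|A|^2$, and by Kato's inequality also $|A|$, is $\Delta_f$-subharmonic, while the hypothesis additionally bounds $|A|$ pointwise by the constant $C_0:=\tfrac12\big(\sqrt{H_f^2+2}-|H_f|\big)$.

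\emph{Step 2: the integrability/Liouville step.} Multiplying the Simons inequality by $\varphi^2e^{-f}$, where $\varphi$ is a Lipschitz cutoff equal to $1$ on an intrinsic ball $B_R$, supported in $B_{2R}$, with $|\nabla\varphi|\le C/R$, integrating by parts and absorbing with Kato's inequality yields a Caccioppoli estimate
\[
\int_M\varphi^2|\nabla A|^2e^{-f}\ \le\ \frac{C}{R^2}\int_{B_{2R}}|A|^2e^{-f}.
\]
One then wants the right-hand side to tend to $0$ as $R\to\infty$. When $1\le q\le2$ this is immediate, since $|A|^2\le C_0^{\,2-q}|A|^q$ and hence $\int_M|A|^2e^{-f}\le C_0^{\,2-q}\|A\|_{L^q_f}^q<\infty$; for larger $q$ one first has to upgrade the integrability (e.g.\ by a Moser/De Giorgi iteration based on the same Caccioppoli inequality together with the weighted Sobolev inequality available for CWMC hypersurfaces of the Gaussian soliton, or by a weighted-volume-growth estimate) so as to control $R^{-2}\int_{B_{2R}}|A|^2e^{-f}$. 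Granting this, $\nabla A\equiv0$, and substituting back into the Simons identity forces $|A|^2\big(\tfrac12-|A|^2-|H_f|\,|A|\big)\equiv0$. I expect this to be the main obstacle: without properness one cannot simply invoke polynomial volume growth, so the decay of $R^{-2}\int_{B_{2R}}|A|^2e^{-f}$ has to be wrung out of the combination of the pointwise bound $|A|\le C_0$, the assumption $A\in L^q_f$, and the Simons inequality.

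\emph{Step 3: classification.} Since $M$ is complete with parallel second fundamental form, the classical classification of complete hypersurfaces of $\mathbb{R}^{n+1}$ with $\nabla A=0$ shows that $M$ is a hyperplane, a round sphere $S^n_r(0)$, or a generalized cylinder $S^k_r(0)\times\mathbb{R}^{n-k}$ with $1\le k\le n-1$; embeddedness and completeness exclude proper open pieces. The CWMC equation pins down the radius $r$ in terms of $k$ and $H_f$, and the pointwise bound $|A|\le C_0$ discards the models violating it; what remains is precisely a hyperplane or a generalized cylinder $S^k_r(0)\times\mathbb{R}^{n-k}$ with $1\le k\le n$, as asserted.
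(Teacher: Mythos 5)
Your Step 1 matches the paper (the identity $\Delta_f|A|^2=2(\tfrac12-|A|^2)|A|^2-2H_f\mathrm{tr}A^3+2|\nabla A|^2$ from Guang, and the observation that the pointwise bound on $|A|$ is exactly $|A|^2+|H_f||A|\le\tfrac12$), and your Step 3 is essentially the paper's (Lawson's theorem once $\nabla A\equiv0$, or a hyperplane if $|A|$ vanishes somewhere). The problem is Step 2, and you have correctly identified it yourself: testing the Simons inequality against $\varphi^2 e^{-f}$ alone produces a right-hand side $R^{-2}\int_{B_{2R}}|A|^2e^{-f}$, which the hypothesis $A\in L^q_f(M)$ controls only when $1\le q\le 2$ (via $|A|^2\le C_0^{2-q}|A|^q$). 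For $q>2$ your proposal gestures at a Moser iteration or a weighted Sobolev inequality but carries out neither, so as written the proof covers only part of the stated range of $q$. This is a genuine gap, not a presentational one.

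The paper closes it with a simple but essential modification: test against $\varphi^2|A|^q e^{-f}$ rather than $\varphi^2 e^{-f}$. Since $|A|^q\Delta_f|A|^2\ge 2|A|^q|\nabla A|^2$ under your pointwise hypothesis, integrating by parts gives
\[
2\int_M\varphi^2|A|^q|\nabla A|^2e^{-f}\le-2q\int_M\varphi^2|A|^{q}|\nabla|A||^2e^{-f}-4\int_M\varphi|A|^{q+1}\langle\nabla\varphi,\nabla|A|\rangle e^{-f},
\]
and the extra term coming from differentiating $|A|^q$ enters with a \emph{favorable} sign, so Young's inequality with $\varepsilon=1/q$ absorbs the cross term completely and leaves
\[
\int_M\varphi^2|A|^q|\nabla A|^2e^{-f}\le\frac{1}{q}\int_M|A|^{q+2}|\nabla\varphi|^2e^{-f}\le\frac{C_0^2}{q}\int_M|A|^{q}|\nabla\varphi|^2e^{-f},
\]
whose right-hand side tends to $0$ for the standard cutoffs precisely because $A\in L^q_f(M)$ --- for every $q\ge1$, with no iteration and no volume-growth input. (The conclusion is then $|A||\nabla A|\equiv0$ rather than $\nabla A\equiv0$, which is why the paper splits into the cases $\mathcal{A}=\{|A|=0\}$ empty or not before invoking Lawson.) You should replace your Step 2 by this weighted test function; the rest of your outline then goes through. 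A minor remark: the sign of your $\lambda\,\mathrm{tr}(A^3)$ term is opposite to the paper's, but since you immediately bound $|\mathrm{tr}(A^3)|\le|A|^3$ this is harmless.
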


We remark that for $M^n=S^n_r(0)$ the assumption on the upper bound is not satisfied unless $r=\sqrt{2n}$ (that is $H_f=0$) and for $S^k_r(0)\times \mathbb{R}^{n-k}$ the assumption on the upper bound is not satisfied unless $r=\sqrt{2k}$ (that is $H_f=0$). Therefore the result is only sharp in these cases. It would be interesting to find a sharp result for $H_f \neq 0$.

This is a new result even for self-shrinkers.

\begin{Corollary}
	Let $M^n$ be a complete embedded self-shrinker in the Gaussian shrinking Ricci soliton $\mathbb{R}^{n+1}$. If $ A\in L^q_f(M)$ for some $q\geq1$ and $|A|^2\leq1/2$, then either $M^n$ is a hyperplane passing through the origin or it is a generalized cylinder $S^k_r(0)\times \mathbb{R}^{n-k}$, with $1\leq k\leq n$ and $r=\sqrt{2k}$.
\end{Corollary}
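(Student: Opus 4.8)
\emph{Proof strategy.} The plan is to combine a Simons-type identity for the second fundamental form with an $L^{p}$ Liouville theorem for the drift Laplacian, using the hypothesis $A\in L^{q}_{f}(M)$ in place of the customary assumption of polynomial volume growth.

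First I would record the relevant identities on the Gaussian shrinking soliton, where $f(x)=|x|^{2}/4$, so that $\bar{\nabla}f=x/2$, $\bar{\nabla}\bar{\nabla}f=\tfrac12\bar{g}$, and the CWMC equation reads $H-\tfrac12\langle x,N\rangle=H_{f}$. Writing $\Delta_{f}=\Delta-\langle\bar{\nabla}f,\nabla\,\cdot\,\rangle$ for the drift Laplacian on $M$, there is a Simons-type identity for CWMC hypersurfaces of $\mathbb{R}^{n+1}$ (see \cite{guang2018gap}, \cite{cheng2016rigidity})
\[
\tfrac12\,\Delta_{f}|A|^{2}=|\nabla A|^{2}+|A|^{2}\left(\tfrac12-|A|^{2}\right)+H_{f}\,\mathrm{tr}(A^{3}),
\]
where $\mathrm{tr}(A^{3})=\sum_{i}\kappa_{i}^{3}$ in terms of the principal curvatures $\kappa_{i}$. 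Since $|\mathrm{tr}(A^{3})|\le|A|^{3}$, this gives
\[
\tfrac12\,\Delta_{f}|A|^{2}\ \ge\ |\nabla A|^{2}+|A|^{2}\left(\tfrac12-|A|^{2}-|H_{f}|\,|A|\right).
\]

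Next I would use the pinching. The constant $c:=\tfrac12\big(\sqrt{H_{f}^{2}+2}-|H_{f}|\big)$ is exactly the nonnegative root of $t^{2}+|H_{f}|t=\tfrac12$, and $t\mapsto t^{2}+|H_{f}|t$ is increasing on $[0,\infty)$; hence the hypothesis $|A|\le c$ is equivalent to $|A|^{2}+|H_{f}|\,|A|\le\tfrac12$. Therefore the last factor above is nonnegative, so $\tfrac12\Delta_{f}|A|^{2}\ge|\nabla A|^{2}\ge0$ and $|A|^{2}$ is a nonnegative $f$-subharmonic function on the complete weighted manifold $(M,g,e^{-f}\,dv)$. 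Moreover $|A|$ is bounded, so $A\in L^{q}_{f}(M)$ with $q\ge1$ upgrades to $A\in L^{s}_{f}(M)$ for every $s\ge q$, since $\int_{M}|A|^{s}e^{-f}\le\|A\|_{\infty}^{s-q}\int_{M}|A|^{q}e^{-f}$; taking $s=\max\{q,4\}$ we obtain $|A|^{2}\in L^{p}_{f}(M)$ for some $p\ge2$. Now I would apply the weighted analogue of Yau's $L^{p}$ Liouville theorem---a nonnegative $f$-subharmonic function lying in $L^{p}_{f}$ for some $p>1$ on a geodesically complete weighted manifold is constant, the standard cutoff and integration-by-parts argument requiring only completeness of $g$ (cf.\ \cite{pigola2014complete})---to conclude that $|A|^{2}$ is constant. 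Feeding $\Delta_{f}|A|^{2}=0$ back into the Simons identity and using $|H_{f}\,\mathrm{tr}(A^{3})|\le|H_{f}|\,|A|^{3}$ together with $\tfrac12-|A|^{2}-|H_{f}|\,|A|\ge0$ forces $|\nabla A|^{2}\le0$, hence $\nabla A\equiv0$: $M$ has parallel second fundamental form.

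Finally I would invoke the classical classification of complete embedded hypersurfaces of $\mathbb{R}^{n+1}$ with $\nabla A\equiv0$ (as used by Cao--Li \cite{cao2013gap}): $M$ is a hyperplane, a round sphere $S^{n}_{r}(0)$, or a generalized cylinder $S^{k}_{r}(0)\times\mathbb{R}^{n-k}$ with $1\le k\le n-1$; since the sphere is the case $k=n$, this yields the asserted dichotomy. (When $H_{f}=0$ one checks from the constancy of $|A|^{2}$ that in fact $|A|^{2}\in\{0,\tfrac12\}$, so the hyperplane passes through the origin and $r=\sqrt{2k}$ in the cylinder case, recovering the self-shrinker corollary.) The step I expect to be the main obstacle is the Liouville argument, i.e.\ making it work without polynomial volume growth: the pinching and the $L^{q}_{f}$ hypothesis have to be used in tandem, the pinching supplying both the $f$-subharmonicity of $|A|^{2}$ and the $L^{\infty}$ bound that boosts $A\in L^{q}_{f}$ to $|A|^{2}\in L^{p}_{f}$ with $p\ge2$ (so that the integration by parts is unproblematic at the zero set of $A$), after which only geodesic completeness of $M$ is needed.
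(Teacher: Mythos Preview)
Your proposal is correct and follows the same overall arc as the paper's proof of Theorem~\ref{t4} (of which this corollary is the case $H_f=0$): Simons-type identity, the pinching makes the zeroth-order term nonnegative, an integrability argument forces $\nabla A\equiv 0$, and Lawson's classification finishes.

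The one genuine difference is in the Liouville step. The paper does not pass through an abstract $L^p$-Liouville theorem: instead it multiplies the Simons identity by $|A|^{q}$ and integrates $\varphi^{2}|A|^{q}\Delta_{f}|A|^{2}$ by parts against cutoffs, using Cauchy--Schwarz and Young to arrive at
\[
\int_{M}\varphi^{2}|A|^{q}|\nabla A|^{2}e^{-f}\ \le\ \frac{C}{q}\int_{M}|A|^{q}|\nabla\varphi|^{2}e^{-f},
\]
so that $A\in L^{q}_{f}$ and $|\nabla\varphi_j|\le 1/j$ give $|A|\,|\nabla A|=0$ directly. Your route---observe $|A|^{2}$ is nonnegative $f$-subharmonic, use the $L^{\infty}$ bound from the pinching to boost $A\in L^{q}_{f}$ to $|A|^{2}\in L^{p}_{f}$ with $p\ge2$, and invoke the weighted Yau $L^{p}$-Liouville theorem---is a clean repackaging of the same idea. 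The paper's version is self-contained and uses the hypothesis $A\in L^{q}_{f}$ at face value for any $q\ge1$; yours outsources the cutoff computation to a known theorem at the cost of the extra boosting step (which is where the $L^{\infty}$ bound from the pinching does double duty).
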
 
We remark that Ancari-Miranda \cite{ancari2021rigidity} proved recently it is possible to obtain a similar result if $|A|^2\leq 1/2$ and $H\in L^q_f(M)$ for some even number $q\geq2$ which is a weaker assumption when $q$ is even.\\

For self-shrinker submanifolds $M^n$ in $\mathbb{R}^{n+p}$, Q.M.Cheng-Peng \cite{cheng2015complete} proved that if $\sup |A|^2<1/2$, then $M^n$ is a hyperplane in $\mathbb{R}^{n+1}$. For codimension 1 we generalize Q.M.Cheng-Peng's result to CWMC hypersurfaces.

\begin{Corollary}\label{t5}
	Let $M^n$ be a complete embedded CWMC hypersurface in $\mathbb{R}^{n+1}$. If
	\[
	\sup |A|<\frac{\sqrt{H_f^2+2}-|H_f|}{2},
	\]
	then $M^n$ is a hyperplane.
\end{Corollary}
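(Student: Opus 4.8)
The plan is to deduce the statement from Theorem~\ref{t4} (here the ambient $\mathbb{R}^{n+1}$ carries the Gaussian structure $f(x)=|x|^2/4$, so ``CWMC hypersurface in $\mathbb{R}^{n+1}$'' means the same as in Theorem~\ref{t4}). The hypothesis $\sup_M|A|<\frac{\sqrt{H_f^2+2}-|H_f|}{2}$ gives at once the pointwise bound $|A|\le\frac{\sqrt{H_f^2+2}-|H_f|}{2}$ required there and also $a_0:=\sup_M|A|<\infty$. So it remains to check the integrability hypothesis $A\in L_f^q(M)$, and then, once Theorem~\ref{t4} applies, to rule out the generalized cylinders; the hyperplane is the only remaining possibility.

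For the integrability: since $|A|\le a_0<\infty$, it is enough to know that $M$ has finite weighted volume, because then $\int_M|A|^q\,e^{-|x|^2/4}\,dV\le a_0^q\int_M e^{-|x|^2/4}\,dV<\infty$ for every $q\ge1$. To obtain this I would use the CWMC equation in the form $\langle x,N\rangle=2(H-H_f)$, which shows $\langle x,N\rangle$ is bounded (as $|H|\le\sqrt n\,|A|\le\sqrt n\,a_0$), and then compute, for $u:=|x|^2$ restricted to $M$,
\[
L_f u=2n+4H_f^2-4HH_f-u\le C-u
\]
for a constant $C$, where $L_f=\Delta_M-\langle\nabla_M f,\nabla_M\,\cdot\,\rangle$. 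Since $|A|$ is bounded, the Gauss equation bounds the sectional curvature of $M$, so $M$ has bounded geometry; combined with the inequality above this forces the immersion $M\hookrightarrow\mathbb{R}^{n+1}$ to be proper (equivalently, $u$ to be a proper exhaustion function of $M$) with polynomial volume growth -- exactly as in the self-shrinker case of \cite{cheng2015complete} -- and hence $M$ has finite weighted volume. I expect this properness/volume-growth step to be the one needing real care; everything else is elementary.

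With $A\in L_f^q(M)$ in hand, Theorem~\ref{t4} says $M$ is a hyperplane or a generalized cylinder $S^k_r(0)\times\mathbb{R}^{n-k}$ with $1\le k\le n$. For such a cylinder one has $|A|=\sqrt k/r$, $|H|=k/r$ and $|\langle x,N\rangle|=r$, so it is CWMC precisely when $|H_f|=|k/r-r/2|$. Set $t:=r/\sqrt k>0$ and $s:=H_f^2=k\!\left(\tfrac1t-\tfrac t2\right)^{2}\ge 0$; then $|A|=1/t$ and $\frac{\sqrt{H_f^2+2}-|H_f|}{2}=\frac{\sqrt{s+2}-\sqrt s}{2}$, so, after rationalizing, the inequality $|A|\ge\frac{\sqrt{H_f^2+2}-|H_f|}{2}$ is equivalent to $\sqrt{s+2}+\sqrt s\ge t$. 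As $s\ge\left(\tfrac1t-\tfrac t2\right)^{2}$ and $v\mapsto\sqrt{v+2}+\sqrt v$ is increasing, it suffices to verify $\sqrt{\left(\tfrac1t-\tfrac t2\right)^{2}+2}+\left|\tfrac1t-\tfrac t2\right|\ge t$, and squaring (after separating the cases $t\le\sqrt 2$ and $t\ge\sqrt2$) reduces this to $t^2\le2$ in the first case and to the identity $1\ge1$ in the second, both of which hold. Thus every CWMC generalized cylinder has $|A|\ge\frac{\sqrt{H_f^2+2}-|H_f|}{2}$, which contradicts $\sup_M|A|<\frac{\sqrt{H_f^2+2}-|H_f|}{2}$. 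Hence the cylinder case cannot occur and $M^n$ is a hyperplane.
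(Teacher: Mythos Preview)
Your overall strategy is correct and agrees with the paper's: reduce to Theorem~\ref{t4} by establishing finite weighted volume, then rule out the cylinders using the strict inequality. The difference lies in how you obtain finite weighted volume.

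The paper's route is shorter and avoids the properness step entirely. Using the Gauss equation and the CWMC relation one computes directly on $M$ that
\[
(Ric_f)_{ij}\;\geq\;\Bigl(-|A|^2-|H_f|\,|A|+\tfrac12\Bigr)\delta_{ij}.
\]
The strict hypothesis $\sup|A|<\tfrac{\sqrt{H_f^2+2}-|H_f|}{2}$ is exactly the condition making the right-hand side a \emph{positive} constant, so $Ric_f\geq c>0$. Then Wei--Wylie's comparison theorem (Theorem~4.1 in \cite{wei2009comparison}) gives finite weighted volume immediately, with no need to know that the immersion is proper or that the extrinsic balls have polynomial volume. This is cleaner than your drift-Laplacian argument $L_f|x|^2\le C-|x|^2$: that inequality is correct, but passing from it to properness and polynomial volume growth is the genuinely delicate part you flagged, and the paper sidesteps it altogether.

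Your explicit elimination of the cylinders is a nice addition; the paper handles that step only implicitly, relying on the remark after Theorem~\ref{t4} together with the strictness of the hypothesis. Your computation in fact shows a bit more than the paper's remark states (for $k=1$ and $r\ge\sqrt{2}$ one actually has equality $|A|=\tfrac{\sqrt{H_f^2+2}-|H_f|}{2}$, not strict inequality), but either way the strict upper bound on $\sup|A|$ excludes every cylinder, and your argument makes this transparent.
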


This paper is organized as follows. In section \ref{s2}, we describe our notations and conventions and prove some basic results. In section \ref{s3}, we prove Theorem \ref{t3}, Theorem \ref{t7}, Theorem \ref{t9} and related results. In Section \ref{s4}, we prove Theorem \ref{t4} and related results.

\section{Preliminaries and basics results}\label{s2}

In this section we describe our notations and conventions and prove some basic results.

\emph{Smooth metric measure spaces.} Let $\left(M,g\right)$ be a Riemannian manifold and let $f$ be a
smooth function on $M$. The triple $\left(M,g,f\right)$ is called
a smooth metric measure space. The measure $e^{-f}dvol$ is called
weighted volume. If $u$ and $v$ are functions on $M^n$ the $L_{f}^{2}$
inner product $u$ and $v$ is defined by
\[
\left\langle u,v\right\rangle _{L_{f}^{2}\left(M\right)}=\int_{M}uve^{-f}
\]
and the $L_{f}^{p}$ norm of $u$ is defined by
\[
\left|u\right|_{L_{f}^{p}\left(M\right)}=\left(\int_{M}\left|u\right|^{p}e^{-f}\right)^{\frac{1}{p}},
\]
where $p\geq1$. The operator
\[
\Delta_{f}=\Delta-\left\langle \nabla f,\nabla\cdot\right\rangle 
\]
is called weighted Laplacian. It is well known that the weighted Laplacian
is a densely defined self-adjoint operator in $L_{f}^{2}$, that is,
if $u$ and $v$ are smooth functions on $M$ with compact support we have
\[
\int_{M}\left(\Delta_{f}u\right)ve^{-f}=-\int_{M}\left\langle \nabla u,\nabla v\right\rangle e^{-f}.
\]
The Bakry-Émery-Ricci curvature is defined by 
\[
Ric_{f}=Ric+\nabla\nabla f.
\]
The triple $\left(M,g,f\right)$ is called a gradient Ricci soliton
if 
\[
Ric_{f}=\lambda g,
\]
where $\lambda$ is a constant. If $\lambda$ is positive, zero or negative
it is called shrinking, steady or expanding, respectively.

\emph{Hypersurfaces in smooth metric measure spaces.} Let $\left(\bar{M}^{n+1},\bar{g},f\right)$ be a smooth metric measure space
and let $M^n$ be a oriented hypersurface of $\bar{M}^{n+1}$. The second
fundamental form is defined by 
\[
A\left(u,v\right)=\langle\bar{\nabla}_{u}v,N\rangle,
\]
where $u$ and $v$ are vector fields on $M^n$, $\bar{\nabla}$
is the Riemannian connection of $\bar{M}^{n+1}$ and $N$ is the unit normal vector.  The mean curvature vector is defined
by 
\[
\vec{H}=(tr_{M^n} A)N,
\]
The weighted mean curvature vector is defined by 
\[
\vec{H}_{f}=\vec{H}+\left(\bar{\nabla}f\right)^{\perp}.
\]
We remark that
\[
\vec{H}_{f}=\sum_{i=1}^n \langle\bar{\nabla}_{e_i}e_i,N\rangle N + \langle \bar{\nabla}f,N \rangle N.
\]
We see that $\vec{H}_{f}$ does not depend on the choice of the normal vector. The weighted mean curvature is defined by
\[
\vec{H}_{f}= -H_fN.
\]
A hypersurface is said to have constant weighted mean curvature (CWMC) if the weighted mean curvature is constant. Note that $\left(M^n,g,f\right)$ is also a smooth metric measure space with weighted measure $e^{-f}dvol_{M}$
and weighted Laplacian
\[
\Delta_{f}=\Delta-\left\langle \nabla f,\nabla\cdot\right\rangle ,
\]
where $g=\bar{g}|M$, $\nabla$ is the Riemannian connection of $M^n$
and $\Delta$ is the Laplacian of $M^n$.

\begin{Example}\label{ex1}
	Let $\bar{M}^{n+1}=\mathbb{R}^{n+1}$, $\bar{g}=\bar{g}_{can}$ and $f\left(x\right)=\frac{\left|x\right|^{2}}{4}$.
	The triple $\left(\mathbb{R}^{n+1},\bar{g},f\right)$ is a gradient shrinking
	Ricci soliton with Bakry-Émery-Ricci curvature 
	\[
	\bar{R}ic_{f}=\frac{1}{2}\bar{g}.
	\]
	A hypersurface $M^n$ of $\mathbb{R}^{n+1}$ is a CWMC hypersurface if and
	only if 
	\[
	H=\frac{\langle x,N\rangle}{2} +H_f.
	\]
	Note that $H_f=0$ if and only if $M^n$ is a self-shrinker. The weighted volume of $M^n$ is $e^{-\frac{|x|^2}{4}}dvol_M$ and the weighted Laplacian of $M^n$ is given by 
	\[
	\Delta_{f}=\Delta-\frac{1}{2}\left\langle x,\nabla\cdot\right\rangle .
	\]
	Note that $\Delta_{f}$ is the operator $\mathcal{L}$ introduced by Colding and Minicozzi \cite{colding2012generic}. We say that $\left(\mathbb{R}^{n+1},\bar{g},f\right)$ is the Gaussian shrinking Ricci soliton.
\end{Example}
\begin{Example}
	Let $\bar{M}^{n+1}=\mathbb{R}^{n+1}$, $\bar{g}=\bar{g}_{can}$ and $f\left(x\right)=-\frac{\left|x\right|^{2}}{4}$.
	The triple $\left(\mathbb{R}^{n+1},\bar{g},f\right)$ is a gradient expanding
	Ricci soliton with Bakry-Émery-Ricci curvature 
	\[
	\bar{R}ic_{f}=-\frac{1}{2}\bar{g}.
	\]
	A hypersurface $M^n$ of $\mathbb{R}^{n+1}$ is a CWMC hypersurface if and
	only if 
	\[
	H=-\frac{\langle x,N\rangle}{2} +H_f.
	\]
	Note that $H_f=0$ if and only if $M^n$ is a self-expander. The weighted volume of $M^n$ is $e^{\frac{|x|^2}{4}}dvol_M$ and the weighted Laplacian of $M^n$ is given by 
	\[
	\Delta_{f}=\Delta+\frac{1}{2}\left\langle x,\nabla\cdot\right\rangle .
	\]
\end{Example}
\begin{Example}
	Let $\bar{M}^{n+1}=\mathbb{R}^{n+1}$, $\bar{g}=\bar{g}_{can}$ and $f\left(x\right)=\langle a, x\rangle$, where $a\in\mathbb{R}^{n+1}$.
	The triple $\left(\mathbb{R}^{n+1},\bar{g},f\right)$ is a gradient steady
	Ricci soliton with Bakry-Émery-Ricci curvature 
	\[
	\bar{R}ic_{f}=0.
	\]
	A hypersurface $M^n$ of $\mathbb{R}^{n+1}$ is a CWMC hypersurface if and
	only if 
	\[
	H=\langle a,N\rangle +H_f.
	\]
	Note that $H_f=0$ if and only if $M^n$ is a translating soliton. The weighted volume of $M^n$ is $e^{-\langle a, x\rangle}dvol_M$ and the weighted Laplacian of $M^n$ is given by 
	\[
	\Delta_{f}=\Delta-\left\langle a,\nabla\cdot\right\rangle .
	\]
	
	In \cite{lopez2018invariant} Lopez classified CWMC surfaces in the steady Ricci soliton $\mathbb{R}^3$ that are invariant by rotations and translations.
\end{Example}
\begin{Example}\label{ex2}
	Let $\bar{M}^{n+1}=\mathbb{R}^{n+1-k}\times S_{\sqrt{2\left(k-1\right)}}^{k}$
	with product metric $\bar{g}$ and potential function $f\left(x,y\right)=\frac{\left|x\right|^{2}}{4}$.
	Here $x$ is the position vector in $\mathbb{R}^{n-k+1}$ and $y$ is
	the position vector in $\mathbb{R}^{k+1}$. The triple $\left(\mathbb{R}^{n+1-k}\times S_{\sqrt{2\left(k-1\right)}}^{k},\bar{g},f\right)$
	is a gradient shrinking Ricci soliton with Bakry-Émery-Ricci curvature
	\begin{align*}
	\bar{R}ic_{f} & =\bar{\nabla}\bar{\nabla}f+\bar{R}ic\\
	& =\frac{1}{2}g_{\mathbb{R}^{n+1-k}}+\frac{1}{2}g_{S_{\sqrt{2\left(k-1\right)}}^{k}}\\
	& =\frac{1}{2}\bar{g}.
	\end{align*}
	A hypersurface $M^n$ of $\mathbb{R}^{n+1-k}\times S_{\sqrt{2\left(k-1\right)}}^{k}$
	is a CWMC hypersurface if and only if 
	\[
	H=\frac{\langle x,N\rangle}{2} +H_f.
	\]
	The weighted volume of $M^n$ is $e^{-\frac{|x|^2}{4}}dvol_M$ and the weighted Laplacian of $M^n$ is given by 
	\[
	\Delta_{f}=\Delta-\frac{1}{2}\left\langle x,\nabla\cdot\right\rangle .
	\]
	We say that $\left(\mathbb{R}^{n+1-k}\times S_{\sqrt{2\left(k-1\right)}}^{k},\bar{g},f\right)$ is the cylinder shrinking Ricci soliton.
\end{Example}

In the rest of the section we prove some results for the level set of the potential function of a shrinking Ricci soliton.

\begin{thm}\label{lema1}
	Let $\bar{M}^{n+1}$ be a complete gradient shrinking Ricci soliton
	with constant scalar curvature. Suppose that
	\[
	M^{n}=\left\{ x\in\bar{M}^{n+1}:f\left(x\right)=\gamma\right\} ,
	\]
	where $\gamma$ is a regular value of $f$. Then $M^n$ is a CWMC hypersurface with
	\[
	H_{f}=\frac{n\lambda-2\lambda\gamma-C}{\sqrt{2\lambda\gamma-\bar{R}+C}},
	\]
	and
	\[
	\gamma=\frac{1}{2\lambda}\left\lbrace \frac{1}{4}\left(-H_{f}+\sqrt{H_{f}^{2}+4\left(n\lambda-\bar{R}\right)}\right)^{2}+\bar{R}-C\right\rbrace .
	\]
	Moreover $\bar{R}\in\lbrace 0, 2\lambda, ..., n\lambda\rbrace$.
\end{thm}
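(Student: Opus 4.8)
\emph{Proof plan.} Since $\gamma$ is a regular value, $\bar{\nabla}f$ has no zeros on $M^{n}$, so I would orient $M^{n}$ by the unit normal $N=\bar{\nabla}f/|\bar{\nabla}f|$, extended to a neighbourhood of $M^{n}$ by the same formula (where it is still a unit field). The whole argument then consists of feeding the standard soliton identities into the definition of $\vec{H}_{f}$. First I would record the elementary consequences of the hypotheses: tracing $\bar{R}ic+\bar{\nabla}\bar{\nabla}f=\lambda\bar{g}$ gives $\bar{\Delta}f=(n+1)\lambda-\bar{R}$; since $\bar{R}$ is constant, $|\bar{\nabla}f|^{2}+\bar{R}-2\lambda f=C$ becomes $|\bar{\nabla}f|^{2}=2\lambda f+C-\bar{R}$, so along $M^{n}$ the function $w:=|\bar{\nabla}f|$ is the positive constant $w^{2}=2\lambda\gamma+C-\bar{R}$, and differentiating this identity gives $\bar{\nabla}w=(\lambda/w)\bar{\nabla}f=\lambda N$; finally, the standard identity $\bar{\nabla}\bar{R}=2\bar{R}ic(\bar{\nabla}f,\cdot)$ together with constancy of $\bar{R}$ yields $\bar{R}ic(\bar{\nabla}f,\cdot)=0$, hence $\bar{R}ic(N,N)=0$ on $M^{n}$.

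Next I would compute $H_{f}$. By definition $\vec{H}_{f}=(tr_{M^{n}}A)N+\langle\bar{\nabla}f,N\rangle N=-H_{f}N$, so $H_{f}=-tr_{M^{n}}A-\langle\bar{\nabla}f,N\rangle$. Here $\langle\bar{\nabla}f,N\rangle=|\bar{\nabla}f|=w$, while $-tr_{M^{n}}A=\operatorname{div}_{M^{n}}N$, which coincides with $\operatorname{div}_{\bar{M}}N$ because $\langle\bar{\nabla}_{N}N,N\rangle=\tfrac{1}{2}N(|N|^{2})=0$. Using $N=\bar{\nabla}f/w$, $\bar{\Delta}f=(n+1)\lambda-\bar{R}$ and $\bar{\nabla}w=\lambda N$,
\[
\operatorname{div}_{\bar{M}}N=\frac{\bar{\Delta}f}{w}-\frac{\langle\bar{\nabla}w,\bar{\nabla}f\rangle}{w^{2}}=\frac{(n+1)\lambda-\bar{R}}{w}-\frac{\lambda}{w}=\frac{n\lambda-\bar{R}}{w},
\]
and therefore $H_{f}=\dfrac{n\lambda-\bar{R}}{w}-w=\dfrac{n\lambda-\bar{R}-w^{2}}{w}=\dfrac{n\lambda-2\lambda\gamma-C}{\sqrt{2\lambda\gamma-\bar{R}+C}}$. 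Since the right-hand side is constant, $M^{n}$ is CWMC with the asserted $H_{f}$ (one also sees that $-tr_{M^{n}}A$ is constant, so $M^{n}$ has constant mean curvature as well).

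To invert this for $\gamma$, I would read $H_{f}w=n\lambda-\bar{R}-w^{2}$ as the quadratic $w^{2}+H_{f}w-(n\lambda-\bar{R})=0$ in the unknown $w>0$. Once one knows that $\bar{R}\le n\lambda$ (this follows from the final step below, and also directly from the short computation $0\le|A|^{2}=\lambda(n\lambda-\bar{R})/w^{2}$ obtained by restricting the soliton equation to $M^{n}$), the admissible root is $w=\tfrac{1}{2}\bigl(-H_{f}+\sqrt{H_{f}^{2}+4(n\lambda-\bar{R})}\bigr)$, and substituting into $2\lambda\gamma=w^{2}+\bar{R}-C$ produces exactly the stated formula for $\gamma$.

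For the last assertion I would invoke the rigidity of complete gradient shrinking Ricci solitons with constant scalar curvature (Fern\'andez-L\'opez and Garc\'ia-R\'io; see also Petersen and Wylie): such a soliton is a finite quotient of a product $\mathbb{R}^{n+1-m}\times N^{m}$ with $(N^{m},g_{N})$ a complete Einstein manifold satisfying $\bar{R}ic_{g_{N}}=\lambda g_{N}$, and the scalar curvature of this product is $\bar{R}=m\lambda$. A complete Einstein manifold with positive Einstein constant is compact (Bonnet--Myers) and has dimension at least $2$, so $m\neq 1$; moreover, since $\gamma$ is a regular value, $f$ is non-constant, which forces a nontrivial Euclidean factor, hence $m\le n$. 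Therefore $m\in\{0,2,3,\dots,n\}$ and $\bar{R}=m\lambda\in\{0,2\lambda,\dots,n\lambda\}$, in particular $\bar{R}\le n\lambda$. The computations of $H_{f}$ and of $\gamma$ are routine once the soliton identities are assembled; the genuinely external ingredient --- and the main obstacle to a fully self-contained argument --- is the rigidity theorem used to pin down $\bar{R}$.
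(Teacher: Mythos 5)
Your proposal is correct and follows essentially the same route as the paper: orient by $N=\bar{\nabla}f/|\bar{\nabla}f|$, use the soliton identities ($\bar{\Delta}f=(n+1)\lambda-\bar{R}$, $|\bar{\nabla}f|^{2}=2\lambda f+C-\bar{R}$, $\bar{R}ic(\bar{\nabla}f,\cdot)=\tfrac12\bar{\nabla}\bar{R}=0$) to get $H_{f}=(n\lambda-\bar{R}-w^{2})/w$, solve the quadratic for $w>0$, and pin down $\bar{R}$ by the rigidity of constant-scalar-curvature shrinkers (the paper cites Fern\'andez-L\'opez--Garc\'ia-R\'io plus Petersen--Wylie to exclude $\bar{R}=(n+1)\lambda$, which is the same content as your product-splitting argument). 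The only caveat is your parenthetical ``direct'' derivation of $\bar{R}\le n\lambda$ from $|A|^{2}=\lambda(n\lambda-\bar{R})/w^{2}$: that identity is true but requires the additional standard fact $|\bar{R}ic|^{2}=\lambda\bar{R}$ (from $\Delta_{f}\bar{R}=2\lambda\bar{R}-2|\bar{R}ic|^{2}$ with $\bar{R}$ constant), not merely restricting the soliton equation to $M^{n}$; since this is an aside and the main argument goes through the rigidity theorem, it does not affect correctness.
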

\begin{proof}
	Using the fact that $N=\frac{\bar{\nabla}f}{\left|\bar{\nabla}f\right|}$ and $\bar{\nabla}\bar{\nabla}f=-\bar{R}ic+\lambda\bar{g}$
	we have
	\begin{align*}
	A\left(v,w\right) & =-\left\langle \bar{\nabla}_{v}N,w\right\rangle \\
	& =-\left\langle \bar{\nabla}_{v}\frac{\bar{\nabla}f}{\left|\bar{\nabla}f\right|},w\right\rangle \\
	& =-\frac{1}{\left|\bar{\nabla}f\right|}\bar{\nabla}\bar{\nabla}f\left(v,w\right)\\
	& =\frac{1}{\left|\bar{\nabla}f\right|}\left(\bar{R}ic\left(v,w\right)-\lambda\left\langle v,w\right\rangle \right).
	\end{align*}
	Then
	\begin{align*}
	H_{f} & =H-\left\langle \bar{\nabla}f,N\right\rangle \\
	& =\frac{1}{\left|\bar{\nabla}f\right|}\left(n\lambda-tr_{M^{n}}\bar{R}ic\right)-\left\langle \bar{\nabla}f,\frac{\bar{\nabla}f}{\left|\bar{\nabla}f\right|}\right\rangle \\
	& =\frac{1}{\left|\bar{\nabla}f\right|}\left(n\lambda-\bar{R}+\bar{R}ic\left(N,N\right)-\left|\bar{\nabla}f\right|^{2}\right).
	\end{align*}
	It is well known that if $\bar{M}^{n+1}$ is a gradient Ricci soliton
	then $\bar{R}ic\left(\bar{\nabla}f\right)=\frac{1}{2}\bar{\nabla}\bar{R}$ (see Equation 1.7 in \cite{cao2009recent}).
	Using this and the fact that $\bar{R}$ is constant and $N=\frac{\bar{\nabla}f}{\left|\bar{\nabla}f\right|}$
	we have $\bar{R}ic\left(N,N\right)=0$. We find that
	\[
	H_{f}=\frac{1}{\left|\bar{\nabla}f\right|}\left(n\lambda-\bar{R}-\left|\bar{\nabla}f\right|^{2}\right).
	\]
	Using the fact that $\bar{R}+\left|\bar{\nabla}f\right|^{2}=2\lambda f+C$ we get the first part of the result.
	
	Now we prove the second part of the result. Multiplying by $\left|\bar{\nabla}f\right|$
	we obtain
	\[
	\left|\bar{\nabla}f\right|^{2}+H_{f}\left|\bar{\nabla}f\right|-\left(n\lambda-\bar{R}\right)=0.
	\]
	Solving the quadratic equation we get
	\[
	\left|\bar{\nabla}f\right|=\frac{1}{2}\left(-H_{f}+\sqrt{H_{f}^{2}+4\left(n\lambda-\bar{R}\right)}\right),
	\]
	or
	\[
	\left|\bar{\nabla}f\right|=\frac{1}{2}\left(-H_{f}-\sqrt{H_{f}^{2}+4\left(n\lambda-\bar{R}\right)}\right).
	\]
	Claim: The second case does not happen. We will prove the claim later.
	Assuming the claim we have
	\[
	\left|\bar{\nabla}f\right|^{2}=\frac{1}{4}\left(-H_{f}+\sqrt{H_{f}^{2}+4\left(n\lambda-\bar{R}\right)}\right)^{2}.
	\]
	Using the fact that $\left|\bar{\nabla}f\right|^{2}=2\lambda f-\bar{R}+C$ we get
	\[
	f=\frac{1}{2\lambda}\left\lbrace \frac{1}{4}\left(-H_{f}+\sqrt{H_{f}^{2}+4\left(n\lambda-\bar{R}\right)}\right)^{2}+\bar{R}-C\right\rbrace .
	\]
	This proves the second part of the result.
	
	Now we prove the claim. We only need to show that
	\[
	\left|\bar{\nabla}f\right|=\frac{1}{2}\left(-H_{f}-\sqrt{H_{f}^{2}+4\left(n\lambda-\bar{R}\right)}\right)\leq0.
	\]
	Since $\bar{R}$ is constant on $\bar{M}^{n+1}$  we know that $\bar{R}\in\left\{ 0,2\lambda,...,n\lambda,\left(n+1\right)\lambda\right\}$ (see Theorem 1 in \cite{fernandez2016gradient}). We will show that $\bar{R}\in\left\{ 0,2\lambda,...,n\lambda\right\} $. Suppose that $\bar{R}=\left(n+1\right)\lambda$. In this case by Proposition 3.3 in \cite{petersen2009rigidity} we see that $\bar{M}^{n+1}$ is Einstein, 	which implies that 
	\[
	\bar{R}ic=\frac{\bar{R}}{n+1}\bar{g}=\lambda\bar{g}.
	\]
	Since $\bar{\nabla}\bar{\nabla}f+\bar{R}ic=\lambda\bar{g}$ we have
	$\bar{\nabla}\bar{\nabla}f=0$. This implies that $\left|\bar{\nabla}f\right|$
	is constant on $\bar{M}^{n+1}$. Since $2\lambda f=-\left|\bar{\nabla}f\right|^{2}-\bar{R}+C$
	we see that $f$ is constant on $\bar{M}^{n+1}$ and $\bar{\nabla}f=0$.
	This contradicts the fact that $M^{n}$ is a level set of the potential
	function. Therefore $\bar{R}\in\left\{ 0,2\lambda,...,n\lambda\right\} $.
	Since $n\lambda-\bar{R}\geq0$ we have 
	\[
	-H_{f}-\sqrt{H_{f}^{2}+4\left(n\lambda-\bar{R}\right)}\leq0.
	\]
	This proves the claim.
\end{proof}

\begin{rem}
Note that when the ambient is a shrinking Ricci soliton with scalar curvature $\bar{R}=n\lambda$, there is no level set which is $f$-minimal. Indeed,
	\[
	H_{f}=\frac{1}{\left|\bar{\nabla}f\right|}\left(n\lambda-\bar{R}-\left|\bar{\nabla}f\right|^{2}\right)
	\]
	implies $\left|\bar{\nabla}f\right|=0$, a contradiction.
\end{rem}

Using the normalization $\lambda=1/2$ and $C=\bar{R}$ (which is used in many papers, see for example pag 2 in \cite{cao2009recent}) we have:
\begin{Corollary}\label{cor2.6}
	Let $\bar{M}^{n+1}$ be a complete gradient shrinking Ricci soliton
	with constant scalar curvature. Suppose that
	\[
	M^{n}=\left\{ x\in\bar{M}^{n+1}:f\left(x\right)=\gamma\right\} ,
	\]
	where $\gamma$ is a regular value of $f$. Then $M^n$ is a CWMC hypersurface with
	\[
	H_{f}=\frac{n-2\gamma-2\bar{R}}{2\sqrt{\gamma}},
	\]
	and
	\[
	\gamma= \frac{1}{4}\left(-H_{f}+\sqrt{H_{f}^{2}+4\left(\frac{n}{2}-\bar{R}\right)}\right)^{2} .
	\]
	Moreover $\bar{R}\in\lbrace 0, 1, ..., \frac{n}{2}\rbrace$.
\end{Corollary}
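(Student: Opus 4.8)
The plan is to deduce Corollary~\ref{cor2.6} directly from Theorem~\ref{lema1} by imposing the normalization $\lambda=1/2$ and $C=\bar R$. First I would observe that this normalization is always available for a complete gradient shrinking Ricci soliton with constant scalar curvature: rescaling $\bar g$ fixes $\lambda=1/2$, and replacing $f$ by $f+c$ for a suitable constant $c$ leaves $|\bar\nabla f|^2$ and $\bar R$ unchanged while shifting $C$ by $-2\lambda c$, so $c$ can be chosen with $C=\bar R$ (the level value $\gamma$ and the set $\{f=\gamma\}$ being renamed accordingly, and regular values being preserved since $\bar\nabla(f+c)=\bar\nabla f$). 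This is the standard convention, as in \cite{cao2009recent}.

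With $\lambda=1/2$ and $C=\bar R$ in force, I would simply substitute into the three conclusions of Theorem~\ref{lema1}. For the weighted mean curvature, the numerator $n\lambda-2\lambda\gamma-C$ becomes $\tfrac n2-\gamma-\bar R$ and the quantity under the root, $2\lambda\gamma-\bar R+C$, collapses to $\gamma$; hence
\[
H_f=\frac{\tfrac n2-\gamma-\bar R}{\sqrt{\gamma}}=\frac{n-2\gamma-2\bar R}{2\sqrt{\gamma}}.
\]
For the level value, the prefactor $\tfrac{1}{2\lambda}$ becomes $1$, the summand $\bar R-C$ vanishes, and $4(n\lambda-\bar R)=4\bigl(\tfrac n2-\bar R\bigr)$, so
\[
\gamma=\frac14\left(-H_f+\sqrt{H_f^2+4\left(\tfrac n2-\bar R\right)}\right)^2.
\]
Finally, the admissible set $\{0,2\lambda,\dots,n\lambda\}$ for $\bar R$ becomes $\{0,1,\dots,\tfrac n2\}$.

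There is essentially no obstacle here: the argument is a routine specialization of Theorem~\ref{lema1}, and the only point requiring a word of care is the simultaneous legitimacy of $\lambda=1/2$ and $C=\bar R$, which is precisely the normalization recalled in the first paragraph.
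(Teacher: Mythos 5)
Your proposal is correct and matches the paper exactly: the paper states Corollary~\ref{cor2.6} as an immediate specialization of Theorem~\ref{lema1} under the normalization $\lambda=1/2$, $C=\bar R$, with no further argument, and your substitutions reproduce all three conclusions. Your extra paragraph justifying that the normalization is achievable (rescaling $\bar g$ and shifting $f$ by a constant) is a harmless and correct addition that the paper leaves implicit by citing \cite{cao2009recent}.
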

For 3-dimensional shrinking Ricci solitons we have:
 \begin{Corollary}
 	Let $\bar{M}^{3}$ be a complete gradient shrinking Ricci soliton
 	with constant scalar curvature. Suppose that
 	\[
 	M^{2}=\left\{ x\in\bar{M}^{3}:f\left(x\right)=\gamma\right\} ,
 	\]
 	where $\gamma$ is a regular value of $f$. Then $M^2$ is a CWMC hypersurface and $\bar{R}\in\lbrace 0, 1\rbrace$. Moreover
 	\[
 	H_{f}=\frac{1-\gamma}{\sqrt{\gamma}}\ \ \text{in the case}\ \ \bar{R}=0,
 	\]
 	and
 	\[
 	H_{f}=-\sqrt{\gamma}\ \ \text{in the case}\ \ \bar{R}=1.
 	\]
 \end{Corollary}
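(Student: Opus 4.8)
The plan is to obtain this as the special case $n=2$ of Corollary~\ref{cor2.6}, which is itself the normalized ($\lambda=\tfrac12$, $C=\bar R$) form of Theorem~\ref{lema1}. Since $M^2$ is the level set $\{x\in\bar M^3: f(x)=\gamma\}$ at a regular value $\gamma$ of the potential function, Corollary~\ref{cor2.6} applies verbatim and already delivers three facts: $M^2$ is CWMC; $\bar R\in\{0,1,\dots,\tfrac n2\}$; and $H_f=\dfrac{n-2\gamma-2\bar R}{2\sqrt\gamma}$. With $n=2$ the set $\{0,1,\dots,\tfrac n2\}$ is just $\{0,1\}$, which is the asserted restriction on $\bar R$.

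It then only remains to substitute $n=2$ into the formula for $H_f$: this gives $H_f=\dfrac{2-2\gamma-2\bar R}{2\sqrt\gamma}=\dfrac{1-\gamma-\bar R}{\sqrt\gamma}$. In the case $\bar R=0$ this is exactly $H_f=\dfrac{1-\gamma}{\sqrt\gamma}$, and in the case $\bar R=1$ it collapses to $H_f=\dfrac{-\gamma}{\sqrt\gamma}=-\sqrt\gamma$, as claimed. One can sanity-check these against the companion expression $\gamma=\tfrac14\bigl(-H_f+\sqrt{H_f^2+4(\tfrac n2-\bar R)}\bigr)^2$ from Corollary~\ref{cor2.6}: for $\bar R=0$ one recovers $t:=\sqrt\gamma$ as the positive root of $t^2+H_ft-1=0$; for $\bar R=1$ the discriminant $4(\tfrac n2-\bar R)$ vanishes, so $\gamma=\tfrac14(-H_f+|H_f|)^2$, and since $\gamma=|\bar\nabla f|^2>0$ on a regular level set this forces $H_f<0$ and hence $\gamma=H_f^2$, i.e.\ $H_f=-\sqrt\gamma$, consistent with the above.

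There is no real obstacle: the statement is a pure arithmetic specialization of an already-established result, so the only ``work'' is bookkeeping with $n=2$. The genuine mathematical content — that $M^n$ is CWMC with the stated $H_f$, the quadratic-equation argument selecting the correct sign of $|\bar\nabla f|$, the classification $\bar R\in\{0,2\lambda,\dots,n\lambda\}$, and the exclusion of the Einstein case $\bar R=(n+1)\lambda$ — all resides in Theorem~\ref{lema1} and is invoked here as a black box. Should one want a self-contained argument, one simply repeats the proof of Theorem~\ref{lema1} with $n=2$ and the normalization $\lambda=\tfrac12$, $C=\bar R$; no new ideas are needed.
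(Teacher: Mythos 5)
Your proposal is correct and is exactly the paper's route: the corollary is stated as an immediate specialization of Corollary~\ref{cor2.6} (the normalized form of Theorem~\ref{lema1}) to $n=2$, and your substitution $H_f=\frac{1-\gamma-\bar R}{\sqrt{\gamma}}$ with the two cases $\bar R=0$ and $\bar R=1$ is the whole argument. The consistency check against the formula for $\gamma$ is a nice touch but not needed.
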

For 4-dimensional shrinking Ricci solitons we have:
\begin{Corollary}
	Let $\bar{M}^{4}$ be a complete gradient shrinking Ricci soliton
	with constant scalar curvature. Suppose that
	\[
	M^{3}=\left\{ x\in\bar{M}^{4}:f\left(x\right)=\gamma\right\} ,
	\]
 	where $\gamma$ is a regular value of $f$. Then $M^3$ is a CWMC hypersurface and $\bar{R}\in\lbrace 0, 1, \frac{3}{2}\rbrace$. Moreover
    \[
	H_{f}=\frac{3-2\gamma}{2\sqrt{\gamma}},\ \ \text{in the case}\ \ \bar{R}=0,
	\]
	\[
	H_{f}=\frac{1-2\gamma}{2\sqrt{\gamma}},\ \ \text{in the case}\ \ \bar{R}=1,
	\]
	and
	\[
	H_{f}=-\sqrt{\gamma},\ \ \text{in the case}\ \ \bar{R}=\frac{3}{2}.
	\]
\end{Corollary}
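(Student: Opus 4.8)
The plan is to derive this as an immediate specialization of Corollary \ref{cor2.6} to the case $n=3$, so essentially no new work beyond bookkeeping is needed. First I would invoke Corollary \ref{cor2.6} with $\bar M^{n+1}=\bar M^4$, i.e.\ $n=3$; since the corollary asserts $\bar R\in\{0,1,\dots,\tfrac n2\}$, for $n=3$ this set is exactly $\{0,1,\tfrac32\}$, giving the claimed trichotomy for the scalar curvature. It also tells us directly that $M^3$ (being a regular level set $f^{-1}(\gamma)$) is a CWMC hypersurface, which is the first assertion.

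Next I would substitute $n=3$ into the weighted mean curvature formula $H_f=\dfrac{n-2\gamma-2\bar R}{2\sqrt\gamma}$ from Corollary \ref{cor2.6} and treat the three cases. For $\bar R=0$ this gives $H_f=\dfrac{3-2\gamma}{2\sqrt\gamma}$; for $\bar R=1$ it gives $H_f=\dfrac{3-2\gamma-2}{2\sqrt\gamma}=\dfrac{1-2\gamma}{2\sqrt\gamma}$; and for $\bar R=\tfrac32$ it gives $H_f=\dfrac{3-2\gamma-3}{2\sqrt\gamma}=\dfrac{-2\gamma}{2\sqrt\gamma}=-\sqrt\gamma$. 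These are precisely the three displayed formulas in the statement. (One could equally obtain these by first plugging into the general formula of Theorem \ref{lema1} with $\lambda=\tfrac12$, $C=\bar R$, but routing through Corollary \ref{cor2.6} is the shortest path.)

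There is really no substantive obstacle here; the only thing to be careful about is the algebraic simplification in the $\bar R=\tfrac32$ case, where the numerator $3-2\gamma-2\bar R$ collapses to $-2\gamma$ and the $\sqrt\gamma$ in the denominator cancels to produce $-\sqrt\gamma$ — and noting implicitly that $\gamma>0$ (so that $\sqrt\gamma$ makes sense), which holds because $\gamma$ is a regular value of $f$ and $|\bar\nabla f|^2=2\lambda\gamma-\bar R+C=\gamma>0$ under the chosen normalization. With that remark the proof is complete.
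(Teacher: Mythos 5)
Your proof is correct and follows exactly the route the paper intends: the corollary is stated as an immediate specialization of Corollary \ref{cor2.6} to $n=3$, and your substitutions into $\bar R\in\{0,1,\dots,\tfrac n2\}$ and $H_f=\frac{n-2\gamma-2\bar R}{2\sqrt\gamma}$ are all accurate. The remark that $\gamma>0$ under the normalization $\lambda=\tfrac12$, $C=\bar R$ is a nice touch but otherwise nothing further is needed.
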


Another consequence of Theorem \ref{lema1} is the following.

\begin{Corollary}\label{corintro}
	Let $r>0$. Then $S^{n-k}_r\left(0\right)\times S_{\sqrt{2\left(k-1\right)}}^{k}(0)$
	is a CWMC hypersurface in the cylinder shrinking Ricci soliton $\mathbb{R}^{n+1-k}\times S_{\sqrt{2\left(k-1\right)}}^{k}(0)$
	and $r=-H_{f}+\sqrt{H_{f}^{2}+2\left(n-k\right)}$.
\end{Corollary}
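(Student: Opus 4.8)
The plan is to realize $S^{n-k}_r(0)\times S^k_{\sqrt{2(k-1)}}(0)$ as a regular level set of the potential function of the cylinder shrinking Ricci soliton and then simply quote Corollary~\ref{cor2.6} (equivalently Theorem~\ref{lema1}).

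First I would record the data of the ambient space $\bar M^{n+1}=\mathbb{R}^{n+1-k}\times S^k_{\sqrt{2(k-1)}}(0)$ with $f(x,y)=\frac{|x|^2}{4}$. By Example~\ref{ex2} it is a complete gradient shrinking Ricci soliton with $\lambda=\tfrac12$. Its scalar curvature is constant: the flat factor contributes nothing, while the round sphere $S^k$ of radius $\sqrt{2(k-1)}$ has scalar curvature $\frac{k(k-1)}{2(k-1)}=\frac k2$, so $\bar R=\frac k2$. Moreover
\[
|\bar\nabla f|^2+\bar R-2\lambda f=\frac{|x|^2}{4}+\frac k2-\frac{|x|^2}{4}=\frac k2,
\]
so the constant $C$ equals $\bar R=\frac k2$; in particular the normalization $\lambda=\tfrac12$, $C=\bar R$ of Corollary~\ref{cor2.6} is satisfied here.

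Next, given $r>0$ I would set $\gamma=\frac{r^2}{4}>0$. Since $f(x,y)=\frac{|x|^2}{4}$, the slice $\{f=\gamma\}$ is exactly $\{(x,y):|x|=r\}=S^{n-k}_r(0)\times S^k_{\sqrt{2(k-1)}}(0)$, and $\gamma$ is a regular value of $f$ because $\bar\nabla f=\frac{x}{2}\neq 0$ whenever $|x|=r>0$. Hence the hypotheses of Corollary~\ref{cor2.6} hold, so this product is a CWMC hypersurface and, using $\bar R=\frac k2$,
\[
\gamma=\frac14\Bigl(-H_f+\sqrt{H_f^2+4\bigl(\tfrac n2-\tfrac k2\bigr)}\Bigr)^2=\frac14\Bigl(-H_f+\sqrt{H_f^2+2(n-k)}\Bigr)^2.
\]
Comparing with $\gamma=\frac{r^2}{4}$ and taking square roots — both $r$ and $-H_f+\sqrt{H_f^2+2(n-k)}$ are nonnegative since $n-k\geq 1>0$ — gives $r=-H_f+\sqrt{H_f^2+2(n-k)}$, as claimed.

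There is no real obstacle here: the statement is a direct specialization of Theorem~\ref{lema1}. The only points requiring a little care are the correct computation of the scalar curvature of the round sphere $S^k_{\sqrt{2(k-1)}}$, the verification that the normalization $C=\bar R$ holds for this example, and the sign choice when extracting the square root (which is why the hypothesis $r>0$, together with $n-k>0$, is used).
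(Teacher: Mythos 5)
Your proposal is correct and follows essentially the same route as the paper: identify $\lambda=\tfrac12$, $\bar R=C=\tfrac k2$, realize the product as the level set $f^{-1}(r^2/4)$, and apply Theorem~\ref{lema1} to equate $\frac{r^2}{4}$ with $\frac14\bigl(-H_f+\sqrt{H_f^2+2(n-k)}\bigr)^2$. The extra checks you include (regular value, sign of the square root) are implicit in the paper's argument and only make the write-up more complete.
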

\begin{proof}
	By Example \ref{ex2} we have $\lambda=\frac{1}{2}$, $\bar{R}=\frac{k}{2}$
	and $C=\bar{R}$ (since $C-\bar{R}=\left|\bar{\nabla}f\right|^{2}-2\lambda f$).
	We have
	\[
	S^{n-k}_r\left(0\right)\times S_{\sqrt{2\left(k-1\right)}}^{k}(0)=\left\{ (x,y)\in\mathbb{R}^{n+1-k}\times S_{\sqrt{2\left(k-1\right)}}^{k}(0);f\left(x,y\right)=\gamma\right\} ,
	\]
	where $\gamma=\frac{r^{2}}{4}$. By Theorem \ref{lema1} we have
	\[
	\gamma=\frac{1}{4}\left(-H_{f}+\sqrt{H_{f}^{2}+2\left(n-k\right)}\right)^{2}.
	\]
	This proves the result.
\end{proof}
Using similar computations as in the proof of Theorem \ref{lema1} we show that the level sets cannot be $f$-minimal when the ambient space is a steady or expanding Ricci soliton with constant scalar curvature. 

\begin{Proposition}\label{t1}
	Let $\bar{M}^{n+1}$ be a complete gradient Ricci soliton  with constant scalar curvature. If there exists a regular level set of the potential function f which is f-minimal, then $\bar{M}^{n+1}$ is a gradient shrinking Ricci soliton.
\end{Proposition}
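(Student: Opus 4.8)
The plan is to reuse the computation from the proof of Theorem~\ref{lema1}, which nowhere uses that $\lambda>0$, and then read off the sign of $\lambda$ from the norm of the second fundamental form of the $f$-minimal level set. Write $M^{n}=f^{-1}(\gamma)$ for a regular value $\gamma$, so that $N=\bar{\nabla}f/|\bar{\nabla}f|$ along $M^{n}$. Using $\bar{\nabla}\bar{\nabla}f=\lambda\bar{g}-\bar{R}ic$ and the identity $\bar{R}ic(\bar{\nabla}f)=\tfrac12\bar{\nabla}\bar{R}=0$ (valid for any gradient Ricci soliton since $\bar{R}$ is constant, see Equation 1.7 in \cite{cao2009recent}; it gives $\bar{R}ic(N,N)=0$ and $\bar{R}ic(N,v)=0$ for $v$ tangent to $M^{n}$), one obtains, exactly as in the proof of Theorem~\ref{lema1},
\[
A(v,w)=\frac{1}{|\bar{\nabla}f|}\bigl(\bar{R}ic(v,w)-\lambda\langle v,w\rangle\bigr),\qquad H_{f}=\frac{1}{|\bar{\nabla}f|}\bigl(n\lambda-\bar{R}-|\bar{\nabla}f|^{2}\bigr).
\]
Since $M^{n}$ is $f$-minimal, $H_{f}=0$, so $|\bar{\nabla}f|^{2}=n\lambda-\bar{R}$ along $M^{n}$; and because $\gamma$ is a regular value, $|\bar{\nabla}f|>0$ there, hence $n\lambda-\bar{R}>0$.

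Next I would compute $|A|^{2}$ along $M^{n}$. A pointwise Bochner computation for $f$ --- using that $\bar{R}$ is constant (so $\Delta f=(n+1)\lambda-\bar{R}$ and $|\bar{\nabla}f|^{2}=2\lambda f-\bar{R}+C$) and $\bar{R}ic(\bar{\nabla}f,\bar{\nabla}f)=0$ --- gives the identity $|\bar{R}ic|^{2}=\lambda\bar{R}$; since $\bar{R}ic(N,\cdot)=0$, this equals $\sum_{i,j}\bar{R}ic(e_{i},e_{j})^{2}$ for an orthonormal frame $e_{1},\dots,e_{n}$ of $TM^{n}$, while $\sum_{i}\bar{R}ic(e_{i},e_{i})=\bar{R}$. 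Squaring the formula for $A$ and summing, these two facts yield
\[
|A|^{2}=\frac{n\lambda^{2}-\lambda\bar{R}}{|\bar{\nabla}f|^{2}}=\frac{\lambda\,(n\lambda-\bar{R})}{|\bar{\nabla}f|^{2}}=\lambda
\]
along $M^{n}$, the last equality because $|\bar{\nabla}f|^{2}=n\lambda-\bar{R}$. Since $|A|^{2}\geq0$ this forces $\lambda\geq0$; and if $\lambda=0$ then $A\equiv0$ on $M^{n}$, which together with $\bar{R}ic(N,\cdot)=0$ forces $\bar{R}ic=0$ at every point of $M^{n}$, hence $\bar{R}\equiv0$, contradicting $n\lambda-\bar{R}>0$. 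Therefore $\lambda>0$, i.e.\ $\bar{M}^{n+1}$ is a gradient shrinking Ricci soliton.

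The step that needs a little care is the Bochner identity $|\bar{R}ic|^{2}=\lambda\bar{R}$, together with the bookkeeping that the mixed terms $\bar{R}ic(e_{i},N)$ vanish so that the two trace identities on $M^{n}$ line up; everything else is a direct transcription of the proof of Theorem~\ref{lema1}. I would also remark that the resulting identity $|A|^{2}=\lambda$ is sharp: for $S^{n-k}_{\sqrt{2(n-k)}}(0)\times S^{k}_{\sqrt{2(k-1)}}(0)$ inside the cylinder shrinking Ricci soliton of Example~\ref{ex2} one has $|A|^{2}=\tfrac12=\lambda$.
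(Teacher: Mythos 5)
Your proof is correct, but it takes a genuinely different route from the paper's. The paper argues by cases on the sign of $\lambda$: starting from the same identity $|\bar{\nabla}f|^{2}=n\lambda-\bar{R}$ on the $f$-minimal level set, it invokes Theorem 1.3 in \cite{zhang2009completeness} ($\bar{R}\geq 0$ for steady solitons) and Theorem 1 in \cite{fernandez2016gradient} together with the Einstein rigidity used in Theorem \ref{lema1} ($\bar{R}\geq n\lambda$ for expanding solitons with constant scalar curvature) to force $|\bar{\nabla}f|^{2}\leq 0$ in both cases, contradicting regularity of the level. You instead avoid both external classification results by exploiting the soliton identity $|\bar{R}ic|^{2}=\lambda\bar{R}$ (which indeed follows from the Bochner formula for $f$, or equivalently from taking the divergence of $\bar{R}ic(\bar{\nabla}f)=\tfrac12\bar{\nabla}\bar{R}$, once $\bar{R}$ is constant) to compute $|A|^{2}=\lambda$ on the level set; positivity of $|A|^{2}$ then gives $\lambda\geq 0$, and your exclusion of $\lambda=0$ is airtight since $|\bar{R}ic|^{2}=0$ would force $\bar{R}=0$ and hence $|\bar{\nabla}f|^{2}=n\lambda-\bar{R}=0$. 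The bookkeeping you flag is fine: $\bar{R}ic(N,\cdot)=0$ ensures the ambient norm $|\bar{R}ic|^{2}$ and trace $\bar{R}$ restrict correctly to the tangential frame. Your argument is more self-contained, works uniformly in $\lambda$ rather than case by case, and produces the extra geometric identity $|A|^{2}=\lambda$ (consistent with $|A|^{2}=\tfrac12$ for $S^{n-k}_{\sqrt{2(n-k)}}(0)\times S^{k}_{\sqrt{2(k-1)}}(0)$); the paper's argument is shorter on the page but leans on two nontrivial cited theorems about steady and expanding solitons.
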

\begin{proof}
Let $M^n = f^{-1}(\gamma)$ be a regular level set of $f$ which is $f$-minimal. By the proof of Theorem \ref{lema1} we have
\[
	H_{f}=\frac{1}{\left|\bar{\nabla}f\right|}\left(n\lambda-\bar{R}-\left|\bar{\nabla}f\right|^{2}\right).
\]
From the assumption that $M^n$ is f-minimal we have
$$n\lambda-\bar{R}-|\bar{\nabla}f|^2 = 0.$$
First suppose that $\bar{M}^{n+1}$ is a steady soliton ($\lambda = 0$). By Theorem 1.3 in \cite{zhang2009completeness} we have $\bar{R} \geq 0$. From the equation above we conclude that $|\bar{\nabla}f|=0$, which is a contradiction with the fact that $M^n$ is a level of the potential function.\\
Now suppose that $\bar{M}^{n+1}$ is an expanding Ricci soliton ($\lambda < 0$). By Theorem 1 in \cite{fernandez2016gradient} we have $\bar{R}\in\lbrace (n+1)\lambda, n\lambda,..., 2\lambda,0\rbrace$. Using the same argument as in the proof of Lemma \ref{lema1} we have $\bar{R}\geq n\lambda$. From the equation above we conclude that $|\bar{\nabla}f|=0$, which is a contradiction with the fact that $M^n$ is a level of the potential function.
\end{proof}

In Proposition 5.3 in \cite{cao2013gap}, Cao-Li showed that there are no compact self-expanders in the expanding Ricci soliton $(\mathbb{R}^{n+1}, \bar{g}_{can}, f=-|x|^2/4)$. Thus, the above result was expected for expanding Ricci solitons with a proper potential function.

\section{Geometric results in shrinking Ricci solitons}\label{s3}

In this section we prove Theorem \ref{t3}, Theorem \ref{t7}, Theorem \ref{t9} and related results.

Proof of Theorem \ref{t3}.

\begin{proof}
		Fact (a). Let $u$ be a function on $M^{n}$. If $u$ is bounded from
		above and $\Delta_{f}u\geq0$ then $u$ is constant.
		
		Fact (b). Let $u$ be a function on $M^{n}$. If $u$ is bounded from
		below and $\Delta_{f}u\leq0$ then $u$ is constant.
		
		Since $M^{n}$ has finite weighted volume both facts follow from Corollary 1 in \cite{cheng1975differential}. We remark that the extension of these results to smooth metric measure spaces is	straightforward. See also Theorem 25 and Remark 26 in \cite{pigola2011remarks}.
		
		We have
		\[
		\nabla\nabla f\left(u,v\right)=\bar{\nabla}\bar{\nabla}f\left(u,v\right)+\left\langle \bar{\nabla}f,\left(\bar{\nabla}_{u}v\right)^{\perp}\right\rangle .
		\]
		We see that
		\begin{align*}
		\Delta_{f}f & =tr_{M^{n}}\bar{\nabla}\bar{\nabla}f+\left\langle \bar{\nabla}f,\vec{H}\right\rangle -\left\langle \nabla f,\nabla f\right\rangle \\
		& =tr_{M^{n}}\bar{\nabla}\bar{\nabla}f+\left\langle \bar{\nabla}f,\vec{H}+\left(\bar{\nabla}f\right)^{\perp}\right\rangle -\left\langle \bar{\nabla}f,\bar{\nabla}f\right\rangle \\
		& =tr_{M^{n}}\bar{\nabla}\bar{\nabla}f+\left\langle \bar{\nabla}f,\vec{H}_{f}\right\rangle -\left|\bar{\nabla}f\right|^{2}.
		\end{align*}
		
		Proof of Item (a). Since $tr_{M^{n}}\bar{\nabla}\bar{\nabla}f\geq a$
		we have
		\[
		\Delta_{f}f\geq a-\left|H_{f}\right|\left|\bar{\nabla}f\right|-\left|\bar{\nabla}f\right|^{2}.
		\]
		Claim: We have
		\[
		a-\left|H_{f}\right|\left|\bar{\nabla}f\right|-\left|\bar{\nabla}f\right|^{2}\geq0\,\,\,on\,\,\,M^{n},
		\]
		and equality holds if and only if $M^{n}\subseteq\partial D^-(|H_f|,a)$. We
		will prove this claim later. By the claim we have
		\[
		\Delta_{f}f\geq a-\left|H_{f}\right|\left|\bar{\nabla}f\right|-\left|\bar{\nabla}f\right|^{2}\geq0.
		\]
		Since $f\leq\Gamma^-$ on $M^{n}$, by Fact (a) we see that $f$ is
		constant on $M^{n}$. We conclude that
		\[
		a-\left|H_{f}\right|\left|\bar{\nabla}f\right|-\left|\bar{\nabla}f\right|^{2}=0\,\,\,on\,\,\,M^{n}.
		\]
		By the claim we have $M^{n}\subseteq\partial D^-(|H_f|,a)$. Now we prove the
		claim. We have
		\begin{align*}
		& a-\left|H_{f}\right|\left|\bar{\nabla}f\right|-\left|\bar{\nabla}f\right|^{2}\geq0\,\,\,on\,\,\,M^{n}\\
		& \iff\left(\left|\bar{\nabla}f\right|+\frac{\left|H_{f}\right|}{2}\right)^{2}\leq\frac{H_{f}^{2}}{4}+a\,\,\,on\,\,\,M^{n}\\
		& \iff\left|\bar{\nabla}f\right|\leq\frac{1}{2}\left(-\left|H_{f}\right|+\sqrt{H_{f}^{2}+4a}\right)\,\,\,on\,\,\,M^{n}.
		\end{align*}
		Since $\left|\bar{\nabla}f\right|^{2}=2\lambda f-\bar{R}+C$ and
		\[
		2\lambda\Gamma^--\bar{R}+C=\frac{1}{4}\left(-\left|H_{f}\right|+\sqrt{H_{f}^{2}+4a}\right)^{2},
		\]
		we have
		\begin{align*}
		M^{n}\subset\overline{D^-(|H_f|,a)} & \iff f\leq\Gamma^-\,\,\,on\,\,\,M^{n}\\
		& \iff2\lambda f-\bar{R}+C\leq2\lambda\Gamma^--\bar{R}+C\,\,\,on\,\,\,M^{n}\\
		& \iff\left|\bar{\nabla}f\right|^{2}\leq\frac{1}{4}\left(-\left|H_{f}\right|+\sqrt{H_{f}^{2}+4a}\right)^{2}\,\,\,on\,\,\,M^{n}\\
		& \iff\left|\bar{\nabla}f\right|\leq\frac{1}{2}\left(-\left|H_{f}\right|+\sqrt{H_{f}^{2}+4a}\right)\,\,\,on\,\,\,M^{n}.
		\end{align*}
		In the last line we used the fact that $a\geq 0$. We conclude that
		\[
		a-\left|H_{f}\right|\left|\bar{\nabla}f\right|-\left|\bar{\nabla}f\right|^{2}\geq0\,\,\,on\,\,\,M^{n}\iff M^{n}\subset\overline{D^-(|H_f|,a)}.
		\]
		Moreover equality holds in the left hand side if and only if $M^{n}\subseteq\partial D^-(|H_f|,a)$.
		This proves the claim.
		
		Proof of Item (b). Since $tr_{M^{n}}\bar{\nabla}\bar{\nabla}f\leq b$
		we have 
		\[
		\Delta_{f}f\leq b+\left|H_{f}\right|\left|\bar{\nabla}f\right|-\left|\bar{\nabla}f\right|^{2}.
		\]
		Claim. We have
		\[
		b+\left|H_{f}\right|\left|\bar{\nabla}f\right|-\left|\bar{\nabla}f\right|^{2}\leq0\,\,\,on\,\,\,M^{n},
		\]
		and equality holds if and only if $M^{n}\subseteq\partial D^+(|H_f|,b)$. We
		will prove this claim later. By the claim we have
		\[
		\Delta_{f}f\leq b+\left|H_{f}\right|\left|\bar{\nabla}f\right|-\left|\bar{\nabla}f\right|^{2}\leq0.
		\]
		Since $f\geq\Gamma^+$ on $M^{n}$, by Fact (b) we see that $f$ is constant
		on $M^{n}$. We conclude that
		\[
		b+\left|H_{f}\right|\left|\bar{\nabla}f\right|-\left|\bar{\nabla}f\right|^{2}=0\,\,\,on\,\,\,M^{n}.
		\]
		By the claim we have $M^{n}\subseteq\partial D^+(|H_f|,b)$. Now we prove the
		claim. We have
		\begin{align*}
		& b+\left|H_{f}\right|\left|\bar{\nabla}f\right|-\left|\bar{\nabla}f\right|^{2}\leq0\,\,\,on\,\,\,M^{n}\\
		& \iff\left(\left|\bar{\nabla}f\right|-\frac{\left|H_{f}\right|}{2}\right)^{2}\geq\frac{H_{f}^{2}}{4}+b\,\,\,on\,\,\,M^{n}\\
		& \iff\left|\bar{\nabla}f\right|\geq\frac{1}{2}\left(\left|H_{f}\right|+\sqrt{H_{f}^{2}+4b}\right)\,\,\,on\,\,\,M^{n}.
		\end{align*}
		Since $\left|\bar{\nabla}f\right|^{2}=2\lambda f-\bar{R}+C$ and
		\[
		2\lambda\Gamma^+-\bar{R}+C=\frac{1}{4}\left(\left|H_{f}\right|+\sqrt{H_{f}^{2}+4b}\right)^{2},
		\]
		we have
		\begin{align*}
		M^{n}\subset\bar{M}^{n+1}\setminus D^+(|H_f|,b) & \iff f\geq\Gamma^+\,\,\,on\,\,\,M^{n}\\
		& \iff2\lambda f-\bar{R}+C\geq2\lambda\Gamma^+-\bar{R}+C\,\,\,on\,\,\,M^{n}\\
		& \iff\left|\bar{\nabla}f\right|^{2}\geq\frac{1}{4}\left(\left|H_{f}\right|+\sqrt{H_{f}^{2}+4b}\right)^{2}\,\,\,on\,\,\,M^{n}\\
		& \iff\left|\bar{\nabla}f\right|\geq\frac{1}{2}\left(\left|H_{f}\right|+\sqrt{H_{f}^{2}+4b}\right)\,\,\,on\,\,\,M^{n}.
		\end{align*}
		In the last line we used the fact that $b\geq0$. We conclude that
		\[
		b+\left|H_{f}\right|\left|\bar{\nabla}f\right|-\left|\bar{\nabla}f\right|^{2}\leq0\,\,\,on\,\,\,M^{n}\iff M^{n}\subset\bar{M}^{n+1}\setminus D^+(|H_f|,b).
		\]
		Moreover equality holds in the left hand side if and only if $M^{n}\subseteq\partial D^+(|H_f|,b)$.
\end{proof}
The following lemma will be important throughout this work in order to guarantee the equivalence between properness and finite weighted volume.
\begin{Lemma} \label{remark1}
Let ($\bar{M}^{n+1},\bar{g},f)$ be a complete gradient shrinking Ricci soliton with constant scalar curvature and let $M^n$ be a complete CWMC hypersurface in $\bar{M}^{n+1}$. Suppose that $\bar{\nabla}\bar{\nabla}f (N,N) \geq k_1$ or $\text{tr}_{M^n}  \bar{\nabla}\bar{\nabla}f\leq k_2$ where $k_1$ and $k_2$ are constants. Then $M^n$ is properly immersed if and only if $M^n$ has finite weighted volume.
\end{Lemma}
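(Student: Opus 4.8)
The plan is to first reduce both curvature hypotheses to a single upper bound $\mathrm{tr}_{M^{n}}\bar{\nabla}\bar{\nabla}f\le k$ for some constant $k$, then to record a differential inequality for $f$ on $M^{n}$, and finally to prove the two implications separately. For the reduction, note that since $\bar{R}$ is constant, $\mathrm{tr}_{\bar{M}^{n+1}}\bar{\nabla}\bar{\nabla}f=(n+1)\lambda-\bar{R}$ is constant, hence $\mathrm{tr}_{M^{n}}\bar{\nabla}\bar{\nabla}f=(n+1)\lambda-\bar{R}-\bar{\nabla}\bar{\nabla}f(N,N)$; thus $\bar{\nabla}\bar{\nabla}f(N,N)\ge k_{1}$ gives $\mathrm{tr}_{M^{n}}\bar{\nabla}\bar{\nabla}f\le(n+1)\lambda-\bar{R}-k_{1}$, so in either case $\mathrm{tr}_{M^{n}}\bar{\nabla}\bar{\nabla}f\le k$ for some constant $k$. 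After rescaling $\bar{g}$ I may assume $\lambda=1/2$, and after adding a constant to $f$ (which changes neither properness nor the finiteness of $\int_{M^{n}}e^{-f}$) I may assume $|\bar{\nabla}f|^{2}=2\lambda f=f\ge 0$. Using the identity $\Delta_{f}f=\mathrm{tr}_{M^{n}}\bar{\nabla}\bar{\nabla}f+\langle\bar{\nabla}f,\vec{H}_{f}\rangle-|\bar{\nabla}f|^{2}$ from the proof of Theorem \ref{t3}, together with $\vec{H}_{f}=-H_{f}N$ and $|H_{f}|\sqrt{f}\le\tfrac12 H_{f}^{2}+\tfrac12 f$, I obtain on $M^{n}$ the inequality $\Delta_{f}f\le c_{0}-\tfrac12 f$, where $c_{0}=k+\tfrac12 H_{f}^{2}$.

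Suppose first that $M^{n}$ is properly immersed. Since $f$ is proper on $\bar{M}^{n+1}$ (it grows quadratically in the distance, see \cite{cao2009recent}), the restriction $f|_{M^{n}}$ is proper, so for almost every $t$ the set $\Omega_{t}=\{f<t\}\cap M^{n}$ is precompact with smooth boundary $\{f=t\}\cap M^{n}$. Writing $W(t)=\int_{\Omega_{t}}e^{-f}$ and applying the divergence theorem to $\mathrm{div}(e^{-f}\nabla f)=e^{-f}\Delta_{f}f$, I get $0\le e^{-t}\int_{\{f=t\}\cap M^{n}}|\nabla f|=\int_{\Omega_{t}}\Delta_{f}f\,e^{-f}\le c_{0}W(t)-\tfrac12\int_{\Omega_{t}}f\,e^{-f}$. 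The coarea formula yields $\int_{\Omega_{t}}f\,e^{-f}=tW(t)-\int_{0}^{t}W(s)\,ds$, so setting $G(t)=\int_{0}^{t}W(s)\,ds$ one has $(t-2c_{0})G'(t)\le G(t)$ for $t>2c_{0}$; integrating this differential inequality gives $G(t)=O(t)$, and since $W$ is nondecreasing, $W(t)\le G(2t)/t=O(1)$, whence $\int_{M^{n}}e^{-f}=\lim_{t\to\infty}W(t)<\infty$. This is the only place the curvature hypothesis is genuinely needed.

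For the converse, suppose $\int_{M^{n}}e^{-f}<\infty$. The CWMC equation $\vec{H}=-H_{f}N-(\bar{\nabla}f)^{\perp}$ shows that the mean curvature of $M^{n}$ is bounded on every bounded subset of $\bar{M}^{n+1}$. If $M^{n}$ were not properly immersed there would be a point $p\in\bar{M}^{n+1}$ at which $M^{n}$ accumulates, and then, by the monotonicity inequality for immersed hypersurfaces of bounded mean curvature (as in the Euclidean and Gaussian cases treated by Cheng--Zhou; see also \cite{cheng2015stability}), $M^{n}\cap\bar{B}_{\rho}(p)$ would contain, for a suitable small $\rho$, infinitely many connected components each of area at least a fixed positive constant. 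Hence $\mathrm{Vol}(M^{n}\cap\bar{B}_{\rho}(p))=\infty$, and since $f\le\sup_{\bar{B}_{\rho}(p)}f<\infty$ on this ball, $\int_{M^{n}\cap\bar{B}_{\rho}(p)}e^{-f}=\infty$, contradicting the finiteness of the weighted volume.

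I expect the main obstacle to lie in this converse implication: one has to make precise the dichotomy that non-properness at $p$ produces infinitely many components of $M^{n}\cap\bar{B}_{\rho}(p)$ of uniformly positive area (handling a possibly non-compact component by a zooming-in argument), and one must check that the local monotonicity (or mean value) inequality, with constants uniform on the relevant compact region, is available in an arbitrary shrinking Ricci soliton and not merely in Euclidean space. In the first implication the remaining points are routine: using Sard's theorem to work with regular values $t$, and justifying the divergence theorem from the precompactness of $\Omega_{t}$.
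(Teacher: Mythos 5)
Your argument is correct, but it takes a genuinely different route from the paper. The paper's proof is essentially a one-line reduction plus a citation: it observes (exactly as you do, via tracing $\bar{R}ic+\bar{\nabla}\bar{\nabla}f=\lambda\bar{g}$ and using that $\bar{R}$ is constant) that the two curvature hypotheses are interchangeable, converts the bound $\text{tr}_{M^n}\bar{\nabla}\bar{\nabla}f\leq k_2$ into a lower bound on $\bar{\nabla}\bar{\nabla}f(N,N)$, and then invokes Theorem 1.3 of \cite{cheng2019volume} for both implications. You perform the same reduction in the opposite direction and then essentially reprove the cited theorem: your forward implication (proper $\Rightarrow$ finite weighted volume), via the differential inequality $\Delta_f f\leq c_0-\tfrac12 f$, the divergence theorem on the sublevel sets $\Omega_t$, and the ODE $(t-2c_0)G'(t)\leq G(t)$, is complete and is in substance the Cheng--Vieira--Zhou volume estimate; your converse is the standard "finite local volume plus locally bounded mean curvature implies properness" argument. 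What your approach buys is a self-contained proof that makes visible exactly where the curvature hypothesis enters (only in the forward direction) and where the constancy of $\bar{R}$ is used (only to pass between the two hypotheses and to normalize $|\bar{\nabla}f|^2=2\lambda f$); what the paper's approach buys is brevity. Two small remarks on your converse: rather than counting connected components of $M^n\cap \bar{B}_\rho(p)$ (which need not be numerous and may be non-compact), it is cleaner to pass to a subsequence with pairwise intrinsic distances $d_{M}(x_i,x_j)>2\rho$ (possible by completeness and Hopf--Rinow, since the sequence has no limit point in $M^n$) and apply the local volume lower bound to the disjoint intrinsic balls $B^{M}_\rho(x_i)$; and the required lower bound $\mathrm{Vol}(B^{M}_\rho(x_i))\geq c\rho^n$ does hold in an arbitrary ambient manifold with constants depending only on the geometry of the compact set $\bar{B}_{2\rho}(p)$, so the worry you raise there is resolvable by the standard monotonicity argument as in \cite{cheng2015stability} and \cite{cheng2019volume}.
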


\begin{proof} When $\bar{\nabla}\bar{\nabla}f (N,N) \geq k_1$, the result follows from Theorem 1.3 in \cite{cheng2019volume}.

Now suppose that $\text{tr}_{M^n} \bar{\nabla}\bar{\nabla}f \leq k_2$. Using the fact that $\bar{R}ic+\bar{\nabla}\bar{\nabla}f=\lambda\bar{g}$ we have
\begin{eqnarray*}
		(n+1)\lambda&=&\bar{R}+\bar{\Delta}f\\
		            &=&\bar{R}+\text{tr}_{M^n}\bar{\nabla}\bar{\nabla}f+\bar{\nabla}\bar{\nabla}f(N,N).
\end{eqnarray*}
We see that
$$\bar{\nabla}\bar{\nabla}f(N,N) \geq (n+1)\lambda - \bar{R} - k_2.$$
Using Theorem 1.3 in \cite{cheng2019volume} we get the result.
\end{proof}

Proof of Corollary \ref{c1}.

\begin{proof}
By Example \ref{ex2} we have $\lambda=\frac{1}{2}$ and $C=\bar{R}$
(since $C-\bar{R}=\left|\bar{\nabla}f\right|^{2}-2\lambda f$). We
have $\bar{\nabla}\bar{\nabla}f=\frac{1}{2}g_{R^{n+1-k}}$ and
\begin{align*}
tr_{M^{n}}\bar{\nabla}\bar{\nabla}f & =\bar{\Delta}f-\bar{\nabla}\bar{\nabla}f\left(N,N\right)\\
& =\frac{n+1-k}{2}-\bar{\nabla}\bar{\nabla}f\left(N,N\right).
\end{align*}
In particular
\[
\frac{n-k}{2}\leq tr_{M^{n}}\bar{\nabla}\bar{\nabla}f\leq\frac{n+1-k}{2}.
\]
For $D^-(|H_f|,a)$ with $a=\frac{n-k}{2}$ we have
\[
\Gamma^-=\frac{1}{4}\left(-\left|H_{f}\right|+\sqrt{H_{f}^{2}+2\left(n-k\right)}\right)^{2},
\]
and for $D^+(|H_f|, b)$ with $b=\frac{n-k+1}{2}$ we have
\[
\Gamma^+=\frac{1}{4}\left(\left|H_{f}\right|+\sqrt{H_{f}^{2}+2\left(n+1-k\right)}\right)^{2}.
\]
Since $M^{n}$ is properly immersed we know that $M^{n}$ has finite weighted volume (see Lemma \ref{remark1}).

Proof of Item (a). Using the fact that $f(x,y) = \frac{|x|^2}{4}$ we have $D^-(|H_f|,a)=B^{n+1-k}_r\left(0\right)\times S_{\sqrt{2\left(k-1\right)}}^{k}(0)$
and $\partial D^-(|H_f|,a)=S^{n-k}_r\left(0\right)\times S_{\sqrt{2\left(k-1\right)}}^{k}(0)$ where $r=-|H_{f}|+\sqrt{H_{f}^{2}+2\left(n-k\right)}$. Using the fact that $M^{n}$ lies in $\bar{B}^{n+1-k}_r\left(0\right)\times S_{\sqrt{2(k-1)}}^{k}(0)$ and Item (a) of Theorem \ref{t3} we have $M^n\subseteq S^{n-k}_r\left(0\right)\times S_{\sqrt{2\left(k-1\right)}}^{k}(0)$. Since $M^n$ is complete we have $M^n= S^{n-k}_r\left(0\right)\times S_{\sqrt{2\left(k-1\right)}}^{k}(0)$. On the other hand, by Corollary \ref{corintro} we have $r=-H_{f}+\sqrt{H_{f}^{2}+2\left(n-k\right)}$, so $H_f\geq 0$.

Proof of Item (b). Using the fact that $f(x,y) = \frac{|x|^2}{4}$ we have $D^+(|H_f|,b)=B^{n-k+1}_r\left(0\right)\times S_{\sqrt{2(k-1)}}^{k}(0)$ and $\partial D^+(|H_f|,b)=S^{n-k}_r\left(0\right)\times S_{\sqrt{2(k-1)}}^{k}(0)$ where  $r=|H_{f}|+\sqrt{H_{f}^{2}+2\left(n+1-k\right)}$. Using the fact that $M^{n}$ lies outside $B^{n+1-k}_r\left(0\right)\times S_{\sqrt{2(k-1)}}^{k}(0)$ and Item (b) of Theorem \ref{t3} we have $M^{n}\subseteq S^{n-k}_r\left(0\right)\times S_{\sqrt{2(k-1)}}^{k}(0)$. Since $M^n$ is complete we have $M^{n}=S^{n-k}_r\left(0\right)\times S_{\sqrt{2(k-1)}}^{k}(0)$. On the other hand, by Corollary \ref{corintro} we have $r=-H_{f}+\sqrt{H_{f}^{2}+2\left(n-k\right)}$. Therefore
\begin{eqnarray*}
	-H_f+\sqrt{H_f^2+2(n-k)}&=&|H_f|+\sqrt{H_f^2+2(n+1-k)}\\
							&>& |H_f|+\sqrt{H_f^2+2(n-k)}.
\end{eqnarray*}
Hence
\[
-H_f>|H_f|,
\]
which is a contradiction.
\end{proof}

For completeness we include an application of Theorem \ref{t3} to the Gaussian shrinking soliton ambient space. We omit the proof (it is similar to the proof of Corollary \ref{c1}).

\begin{Corollary}
	Let $M^n$ be a complete CWMC hypersurface properly immersed  in the Gaussian shrinking Ricci soliton $\mathbb{R}^{n+1}$.

(a) If $M^n$ lies inside $\bar{B}^{n+1}_{r}(0)$ with $r=-|H_f|+\sqrt{H_f^2+2n}$, then $H_f\geq 0$ and $M^n=S^n_{r}(0)$.

(b) If $M^n$ lies outside $B^{n+1}_{r}(0)$ with $r=|H_f|+\sqrt{H_f^2+2n}$, then $H_f\leq 0$ and $M^n=S^n_{r}(0)$.
\end{Corollary}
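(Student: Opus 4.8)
The plan is to mimic the proof of Corollary \ref{c1} almost verbatim, with the cylinder soliton replaced by the Gaussian one. First I would specialize the data of Example \ref{ex1}: here $\lambda=\tfrac{1}{2}$, the scalar curvature $\bar R$ vanishes identically, and the normalization $C-\bar R=|\bar\nabla f|^{2}-2\lambda f=0$ forces $C=0$; most importantly $\bar\nabla\bar\nabla f=\tfrac{1}{2}\bar{g}_{can}$, so $tr_{M^{n}}\bar\nabla\bar\nabla f\equiv\tfrac{n}{2}$ on every hypersurface. Hence the hypotheses of both parts of Theorem \ref{t3} are met with $a=b=\tfrac{n}{2}>0$. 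Substituting $c_{1}=|H_{f}|$ and $c_{2}=\tfrac{n}{2}$ into the definition of $\Gamma^{\pm}$ and using $f(x)=|x|^{2}/4$, one gets $\Gamma^{\pm}=\tfrac{1}{4}\bigl(\pm|H_{f}|+\sqrt{H_{f}^{2}+2n}\bigr)^{2}$, whence $D^{\pm}(|H_{f}|,\tfrac{n}{2})=B^{n+1}_{r}(0)$ and $\partial D^{\pm}(|H_{f}|,\tfrac{n}{2})=S^{n}_{r}(0)$ with $r=\pm|H_{f}|+\sqrt{H_{f}^{2}+2n}$ (note that $r>0$ in both cases).

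Next, since $M^{n}$ is properly immersed, Lemma \ref{remark1} (applicable because $\bar\nabla\bar\nabla f(N,N)\equiv\tfrac{1}{2}$) shows that $M^{n}$ has finite weighted volume, so Theorem \ref{t3} applies. In part (a) the hypothesis that $M^{n}$ lies inside $\bar{B}^{n+1}_{r}(0)$, with $r=-|H_{f}|+\sqrt{H_{f}^{2}+2n}$, reads $M^{n}\subseteq\overline{D^{-}(|H_{f}|,\tfrac{n}{2})}$, so Theorem \ref{t3}(a) yields $M^{n}\subseteq\partial D^{-}(|H_{f}|,\tfrac{n}{2})=S^{n}_{r}(0)$, and completeness of $M^{n}$ gives $M^{n}=S^{n}_{r}(0)$. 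Symmetrically, in part (b) the hypothesis that $M^{n}$ lies outside $B^{n+1}_{r}(0)$, with $r=|H_{f}|+\sqrt{H_{f}^{2}+2n}$, reads $M^{n}\subseteq\mathbb{R}^{n+1}\setminus D^{+}(|H_{f}|,\tfrac{n}{2})$, so Theorem \ref{t3}(b) gives $M^{n}\subseteq\partial D^{+}(|H_{f}|,\tfrac{n}{2})=S^{n}_{r}(0)$, and again $M^{n}=S^{n}_{r}(0)$.

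Finally I would pin down the sign of $H_{f}$ by applying Theorem \ref{lema1} to the regular level set $S^{n}_{\rho}(0)=\{f=\rho^{2}/4\}$ of $f$ in the complete, constant scalar curvature shrinking soliton $\mathbb{R}^{n+1}$: it gives $\rho^{2}/4=\tfrac{1}{4}\bigl(-H_{f}+\sqrt{H_{f}^{2}+2n}\bigr)^{2}$, i.e.\ the radius of such a sphere equals $-H_{f}+\sqrt{H_{f}^{2}+2n}$. Comparing with the radius $-|H_{f}|+\sqrt{H_{f}^{2}+2n}$ from (a) forces $-H_{f}=-|H_{f}|$, i.e.\ $H_{f}\geq0$; comparing with the radius $|H_{f}|+\sqrt{H_{f}^{2}+2n}$ from (b) forces $-H_{f}=|H_{f}|$, i.e.\ $H_{f}\leq0$. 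This settles both items. The proof is routine given Theorem \ref{t3}, Theorem \ref{lema1} and Lemma \ref{remark1}; the one point worth remarking on is the passage from $M^{n}\subseteq S^{n}_{r}(0)$ to equality, which holds (exactly as in Corollary \ref{c1}) because such an inclusion of a complete immersed $n$-hypersurface into the connected $n$-manifold $S^{n}_{r}(0)$ is a local diffeomorphism, hence a covering map. In contrast to Corollary \ref{c1}(b), part (b) here produces no contradiction: since $a=b=\tfrac{n}{2}$ exactly, with no extra spherical factor in $\bar\nabla\bar\nabla f$, the inner and outer radii need not coincide, so (b) is a genuine rigidity statement rather than a nonexistence one.
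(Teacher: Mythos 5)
Your proposal is correct and is essentially the argument the paper intends (the paper explicitly omits this proof, saying it is similar to that of Corollary \ref{c1}): specialize Example \ref{ex1} to get $a=b=\tfrac{n}{2}$ and $D^{\pm}(|H_f|,\tfrac{n}{2})=B^{n+1}_r(0)$, use Lemma \ref{remark1} to pass from properness to finite weighted volume, apply Theorem \ref{t3}, and fix the sign of $H_f$ by comparing with the level-set radius $-H_f+\sqrt{H_f^2+2n}$ from Theorem \ref{lema1}. Your closing observation that part (b) yields rigidity rather than the contradiction occurring in Corollary \ref{c1}(b) is exactly the right point of divergence from the cylinder case.
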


Proof of Theorem \ref{t7}.

\begin{proof}
Since $M^n$ is a compact hypersurface, the potential function achieves its maximum in some $p\in M$. From the fact that the gradient of $f$ points in the direction of greatest increase  and $\nabla f(p)=0$, we see that $\bar{\nabla}f(p)$ and $N(p)$ have the same direction. Therefore
\begin{eqnarray*}
	H_f+|\bar{\nabla}f|(p) &=& H_f+\langle\bar{\nabla}f(p),N(p)\rangle\\
						  &=&H(p).
\end{eqnarray*}
Using the fact that $\left|\bar{\nabla}f\right|^{2}+\bar{R}-2\lambda f=C$ and the hypothesis on the mean curvature, we get
\begin{eqnarray*}
	\sqrt{2\lambda f(p)+C-\bar{R}}=|\bar{\nabla}f|(p)&\leq& \frac{2H_f-|H_f|+\sqrt{H_f^2+4a}}{2}-H_f\\
								                     &=& \frac{-|H_f|+\sqrt{H_f^2+4a}}{2}.
\end{eqnarray*}
Therefore
\begin{eqnarray*}
	f\leq f(p)\leq \frac{1}{2\lambda}\left\lbrace \left(\frac{-|H_f|+\sqrt{H_f^2+4a}}{2}\right)^2+\bar{R}-C\right\rbrace,
\end{eqnarray*}
which implies that $M^n$ lies in $\overline{D^-(|H_f|,a)}$. By Theorem \ref{t3}, we conclude that $M^n\subseteq\partial D^-(|H_f|,a)$.
\end{proof}

Proof of Corollary \ref{t10}.
\begin{proof}
Take $a=\frac{n-k}{2}$. As in the proof of Corollary \ref{c1} Item (a) we have $tr_{M^{n}}\bar{\nabla}\bar{\nabla}f\geq a$ and $\partial D^-(H_f,a)=S^{n-k}_r\left(0\right)\times S_{\sqrt{2\left(k-1\right)}}^{k}(0)$ where $r=-H_{f}+\sqrt{H_{f}^{2}+2\left(n-k\right)}$. The result follows from Theorem \ref{t7}.
\end{proof}

Proof of Theorem \ref{t9}.

\begin{proof}
	We compute
	\begin{eqnarray*}
		\Delta f&=& \text{tr}_M\bar{\nabla}\bar{\nabla} f -\langle \bar{\nabla}f,N\rangle H\\ 
		&=& \text{tr}_M\bar{\nabla}\bar{\nabla}f+(H_f-H)H\\
		&\leq& b +(H_f-H)H\\
		&=& \frac{H_f^2+4b}{4}-\left(H-\frac{H_f}{2}\right)^2\leq 0.
	\end{eqnarray*}
	In the third line we used the assumption on $\bar{\nabla} \bar{\nabla} f$ and in the fourth line we used the assumption on $H$. Let $\varphi \in C_c^\infty(M^n)$. Integrating by parts, we have
	\begin{eqnarray*}
		\int_M \varphi^2|\nabla f|^2 e^{- f}&\leq& \int_M \varphi^2(|\nabla f|^2-\Delta f)e^{- f}\\
		&=& -\int_M \varphi^2\Delta_{ f} fe^{- f}\\
		&=& \int_M \langle \nabla \varphi^2,\nabla f\rangle e^{- f}\\
		&=& 2\int_M\langle \nabla \varphi, \varphi\nabla f\rangle e^{- f}\\
		&\leq& \int_M \left[ 2|\nabla \varphi|^2+\frac{1}{2}\varphi^2|\nabla f|^2\right]e^{- f}.
	\end{eqnarray*}
	Therefore
	\[
	\frac{1}{2}\int_M \varphi^2|\nabla f|^2 e^{- f}\leq 2\int_M|\nabla \varphi|^2e^{- f}.
	\]
	Fix $p_0\in M^n$ and consider a sequence $\varphi_j \in C_0^\infty(M^n)$ such that $\varphi_j=1$ on $B^M_j(p_0)$, $\varphi_j=0$ on $M^n\setminus B^M_{2j}(p_0)$ and $|\nabla \varphi_j|\leq \frac{1}{j}$. Using the monotone convergence theorem and the fact that the weighted volume is finite we have
	\[
	\int_M |\nabla f|^2 e^{- f}=0,
	\]
	which implies that $f$ is constant on $M^n$.
\end{proof}

Using Theorem \ref{t9}, Lemma \ref{remark1} and Theorem \ref{lema1} we obtain a new result for ambient spaces which are shrinking Ricci solitons with constant scalar curvature. Note that taking $b=\frac{n}{2}$, $\lambda = \frac{1}{2}$, $\bar{R} = 0$ and $C=0$ in the next result we recover Theorem 1.2 in \cite{cheng2016rigidity}.
\begin{Corollary}\label{corolult}
	Let $\bar{M}^{n+1}$ be a shrinking Ricci soliton with constant scalar curvature and let $M^n$ be a complete CWMC hypersurface properly immersed in $\bar{M}^{n+1}$. Suppose that $\text{tr}_{M}\bar{\nabla}\bar{\nabla}f\leq b$ for some $b> 0$. If
	\[
	H\geq \frac{H_f+\sqrt{H_f^2+4b}}{2},
	\]
	then $M^n\subseteq f^{-1}(\gamma)$ for some $\gamma>0$. Moreover, if $f^{-1}(\gamma)$ is connected and complete then $M^n=f^{-1}(\gamma)$ where
	\[
	\gamma= \frac{1}{2\lambda}\left\lbrace \frac{1}{4}\left(-H_f+\sqrt{H_f^2+4(n\lambda-\bar{R})}\right)^2+\bar{R}-C\right\rbrace.
	\]
\end{Corollary}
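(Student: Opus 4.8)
The plan is to deduce the statement from Theorem \ref{t9} and then upgrade its conclusion using Lemma \ref{remark1} and Theorem \ref{lema1}. First, since $M^n$ is properly immersed and $\mathrm{tr}_{M}\bar\nabla\bar\nabla f\le b$, Lemma \ref{remark1} gives that $M^n$ has finite weighted volume. A shrinking Ricci soliton is in particular a smooth metric measure space, and the hypothesis $H\ge\tfrac12\bigl(H_f+\sqrt{H_f^2+4b}\bigr)$ is exactly the one required by Theorem \ref{t9}; that theorem then yields $M^n\subseteq f^{-1}(\gamma)$ for some $\gamma\in\mathbb{R}$.

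Next I would check that $\gamma$ is a regular value of $f$ and that $\gamma>0$. Since $f$ is constant on $M^n$, we have $\nabla f=(\bar\nabla f)^{\top}=0$ along $M^n$, so $\bar\nabla f=\langle\bar\nabla f,N\rangle N$ there, and the CWMC identity $H=H_f+\langle\bar\nabla f,N\rangle$ gives $|\bar\nabla f|=|H-H_f|$ on $M^n$. As $b>0$ we have $\sqrt{H_f^2+4b}>H_f$, so the hypothesis forces $H-H_f\ge\tfrac12\bigl(-H_f+\sqrt{H_f^2+4b}\bigr)>0$, and hence $|\bar\nabla f|>0$ everywhere on $M^n$. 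Combining this with the soliton identity $|\bar\nabla f|^2=2\lambda f-\bar R+C$ and the constancy of $\bar R$ (hence of $C$), the quantity $2\lambda\gamma-\bar R+C=|\bar\nabla f|^2$ is a positive constant on $f^{-1}(\gamma)$, so $\bar\nabla f$ cannot vanish there: $\gamma$ is a regular value, and $\gamma>0$ (with the paper's normalization $C=\bar R$ this reads simply $2\lambda\gamma=|\bar\nabla f|^2>0$).

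Finally, assume $f^{-1}(\gamma)$ is connected and complete. Since $\gamma$ is a regular value, $f^{-1}(\gamma)$ is a smooth embedded hypersurface of $\bar M^{n+1}$; as $M^n$ is properly immersed its image is closed in $\bar M^{n+1}$, hence closed in $f^{-1}(\gamma)$, and it is also open there (same dimension), so by connectedness $M^n=f^{-1}(\gamma)$. Applying Theorem \ref{lema1} to the regular level set $f^{-1}(\gamma)$ of the complete shrinking soliton with constant scalar curvature gives $\bar R\in\{0,2\lambda,\dots,n\lambda\}$ and the asserted formula for $\gamma$, the $H_f$ there being the weighted mean curvature of $M^n=f^{-1}(\gamma)$. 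The \emph{main point}, and the only place real care is needed, is recognizing that the strict inequality $H>H_f$ coming from $b>0$ is precisely what makes $\gamma$ a regular value, so that $f^{-1}(\gamma)$ is a genuine hypersurface to which Theorem \ref{lema1} applies; the rest is a direct invocation of Theorems \ref{t9} and \ref{lema1} together with the soliton identities.
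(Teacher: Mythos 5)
Your proof is correct and follows essentially the same route as the paper: Lemma \ref{remark1} to convert properness into finite weighted volume, Theorem \ref{t9} to get $M^n\subseteq f^{-1}(\gamma)$, and Theorem \ref{lema1} for the formula for $\gamma$. You are in fact more careful than the paper's own proof, which omits both the verification that $\gamma$ is a regular value (needed before Theorem \ref{lema1} can be invoked) and the open--closed argument yielding $M^n=f^{-1}(\gamma)$; your observation that $b>0$ forces $H>H_f$, hence $|\bar{\nabla}f|=H-H_f>0$ along the level set via the soliton identity $|\bar{\nabla}f|^{2}=2\lambda f-\bar{R}+C$, is exactly the right way to supply the former.
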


\begin{proof}
Since $M^{n}$ is properly immersed we know that $M^{n}$ has finite weighted volume (see Lemma \ref{remark1}). Using Theorem \ref{t9} and Lemma \ref{remark1} we have $M^n \subseteq f^{-1}(\gamma)$. Suppose that $f^{-1}(\gamma)$ is connected and complete. Then $M^n=f^{-1}(\gamma)$. Using Theorem \ref{lema1} we get the conclusion of $\gamma$.
\end{proof}

Proof of Corollary \ref{corin3}.

\begin{proof}
Assuming that the conclusion of the result is false we will show that
this leads to a contradiction. Take $b=\frac{n+1-k}{2}$. If the conclusion is false we have
\[
H\geq\frac{H_{f}+\sqrt{H_{f}^{2}+4b}}{2}.
\]
As in the proof Corollary \ref{c1} we have $\text{tr}_{M}\bar{\nabla}\bar{\nabla}f\leq b$, $\lambda=\frac{1}{2}$ and $C=\bar{R}=\frac{k}{2}$.
Using Corollary \ref{corolult} we have $M^n = f^{-1}(\gamma)$ where
\[
\gamma=\frac{1}{4}\left(-H_{f}+\sqrt{H_{f}^{2}+2\left(n-k\right)}\right)^2.
\]
Using the fact that $f(x,y)=\frac{\left|x\right|^{2}}{4}$ we see that $M^{n}=S_{r}^{n-k}\left(0\right)\times S_{\sqrt{2\left(k-1\right)}}^{k}\left(0\right)$
with $r=-H_{f}+\sqrt{H_{f}^{2}+2\left(n-k\right)}$. However for $M^n$ we have (see Corollary \ref{t10})
$$H = \frac{H_{f}+\sqrt{H_{f}^{2}+2(n-k)}}{2}<\frac{H_{f}+\sqrt{H_{f}^{2}+2(n+1-k)}}{2}=\frac{H_{f}+\sqrt{H_{f}^{2}+4b}}{2},$$
a contradiction.
\end{proof}

\section{Constant weighted mean curvature hypersurfaces in $\mathbb{R}^{n+1}$}\label{s4}

In this section we prove Theorem \ref{t4} and Corollary \ref{t5}.

Proof of Theorem \ref{t4}.
\begin{proof}
By \cite{guang2018gap} we have
	\[
	\Delta_f|A|^2=2\left(\frac{1}{2}-|A|^2\right)|A|^2-2H_f\text{tr}A^3+2|\nabla A|^2.
	\]
	Considering $q\geq 1$ as in the hypothesis, we have
	\begin{eqnarray*}
		|A|^q\Delta_f|A|^2&=&2\left(\frac{1}{2}-|A|^2\right)|A|^{q+2}-2|A|^qH_f\text{tr}A^3+2|A|^q|\nabla A|^2\\
		&\geq&2\left(\frac{1}{2}-|A|^2\right)|A|^{q+2}-2|H_f||A|^{q+3}+2|A|^q|\nabla A|^2\\
		&=&|A|^{q+2}(1-2|A|^2-2|H_f||A|)+2|A|^q|\nabla A|^2.
	\end{eqnarray*}
	Note that since 
	\[
	|A|\leq\frac{\sqrt{H_f^2+2}-|H_f|}{2},
	\]
	we have that $1-2|A|^2-2|H_f||A|\geq0$. Therefore
	\[
	|A|^q\Delta_f|A|^2\geq 2|A|^q|\nabla A|^2.
	\]
	Let $\varphi\in C_c^\infty(M^n)$. Integrating by parts we have
	\begin{eqnarray*}
		2\int_M \varphi^2|A|^q|\nabla A|^2e^{-f}&\leq& \int_M \varphi^2|A|^q\Delta_f|A|^2e^{-f}\\
		&=& -\int_M \langle \nabla \varphi^2|A|^q, \nabla |A|^2\rangle e^{-f}\\
		&=& -2\int_M q|A|^{q}\varphi^2|\nabla |A||^2e^{-f}-4\int_M \langle |A|^{\frac{q}{2}+1}\nabla \varphi, \varphi|A|^{\frac{q}{2}}\nabla|A|\rangle e^{-f}\\
		&\leq& -2\int_M q|A|^{q}\varphi^2|\nabla |A||^2e^{-f}+4\int_M  |A|^{\frac{q}{2}+1}|\nabla \varphi| \varphi|A|^{\frac{q}{2}}|\nabla|A|| e^{-f}.
	\end{eqnarray*}
	Here in the last line we use the Cauchy-Schwarz inequality. From the identity $2ab\leq \varepsilon a^2+\frac{1}{\varepsilon}b^2$, choosing $a=|A|^{\frac{q}{2}+1}|\nabla \varphi|$ and $b=\varphi|A|^{\frac{q}{2}}|\nabla |A||$ we have
	\[
	2\int_M \varphi^2|A|^q|\nabla A|^2e^{-f}\leq 2\varepsilon\int_M |A|^{q+2}|\nabla \varphi|^2e^{-f}+\left(\frac{2}{\varepsilon}-2q\right)\int_M \varphi^2|A|^q|\nabla|A||^2e^{-f}.
	\]
	Choosing $\varepsilon=\frac{1}{q}$, we get
	\begin{eqnarray*}
		\int_M \varphi^2|A|^q|\nabla A|^2e^{-f}\leq \frac{1}{q}\int_M |A|^{q+2}|\nabla \varphi|^2e^{-f}.
	\end{eqnarray*}
	Since $|A|^2\leq C$, we have 
	\[
	\int_M \varphi^2|A|^q|\nabla A|^2e^{-f}\leq \frac{C}{q}\int_M |A|^{q}|\nabla \varphi|^2e^{-f}.
	\]
	Let us fix $p_0\in M$ and consider the sequence $\varphi_j$ such that $\varphi_j=1$ on $B^M_j(p_0)$, $\varphi_j=0$ on $M\setminus B^M_{2j}(p_0)$ and $|\nabla \varphi_j|\leq \frac{1}{j}$. Using the monotone convergence theorem and the assumption that $A\in L^q_f(M)$, we conclude that
	\[
	|A||\nabla A|=0.
	\]
	Let $\mathcal{A}=\lbrace x\in M; |A|(x)=0\rbrace$. The set $M\setminus \mathcal{A}$ is open and since
	\[
	|A||\nabla |A||\leq |A||\nabla A|=0,
	\]
	we see that $|A|$ is constant on $M\setminus \mathcal{A}$. If $\mathcal{A}\neq \emptyset$, then using continuity we conclude that $|A|=0$, which implies that $M^n$ is a hyperplane. If $\mathcal{A}= \emptyset$, then $|\nabla A|=0$. In this case using Theorem 4 in Lawson \cite{lawson1969local} and the fact that $M^n$ is complete we conclude that $M^n$ is a generalized cylinder.	
\end{proof}

Proof of Corollary \ref{t5}.

\begin{proof}
	We estimate the Bakry-Emery-Ricci tensor $Ric_f$ of a CWMC hypersurface. Let $p\in M$ and choose a orthonormal basis of $T_pM$ such that $\nabla_{e_j}e_i(p)=0$. Let $h_{ij}=A(e_i,e_j)$. We have
	\begin{eqnarray*}
		(Ric_f)_{ij}&=&R_{ij}+\nabla_{e_i}\nabla_{e_j}f\\
	                &=& R_{ij}+\bar{\nabla}_{e_i}\bar{\nabla}_{e_j}f+\langle \bar{\nabla}f,A(e_i,e_j)N\rangle\\
	                &=& R_{ij}+\frac{1}{2}\delta_{ij}+\frac{\langle x,N\rangle}{2}h_{ij}\\
		            &=&-Hh_{ij}-\sum_{l=1}^nh_{il}h_{lj}+\frac{\langle x,N\rangle}{2}h_{ij}+\frac{1}{2}\delta_{ij}\\
	             	&=&-H_fh_{ij}-\sum_{l=1}^nh_{il}h_{lj}+\frac{1}{2}\delta_{ij}\\
	            	&\geq&-|H_f||h_{ij}|-\sum_{l=1}^nh_{il}h_{lj}+\frac{1}{2}\delta_{ij}\\
	            	&\geq&-|H_f||A|-\sum_{l=1}^nh_{il}h_{lj}+\frac{1}{2}\delta_{ij}.
	\end{eqnarray*}
	In the fourth line we used the Gauss equation. Since $\sup |A|<\frac{\sqrt{H_f^2+2}-|H_f|}{2}$ we have
	\[
	Ric_f\geq -|A|^2-|H_f||A|+\frac{1}{2}\geq -(\sup|A|)^2-|H_f|\sup|A|+\frac{1}{2}>0.
	\]
	By Theorem 4.1 in \cite{wei2009comparison}, the condition above implies that $M^n$ has finite weighted volume. Since $|A|^q$ is bounded and $M^n$ has finite weighted volume, we see that $|A|\in L_f^q(M)$. Applying Theorem \ref{t4} we conclude that $M^n$ is a hyperplane.
\end{proof}

\bibliographystyle{abbrv}
\bibliography{Biblio1}

\end{document} 
------------------------------------------------------------------------------------------------
Let $\varphi\in C_0^\infty(M)$. Multiplying by $\varphi^2$ and integrating by parts we get
\begin{eqnarray*}
	\int_M \varphi^2 |\nabla f|^2e^{-f}&=& \int_M [\langle \nabla (\varphi^2 f),\nabla f\rangle -2\langle f\nabla \varphi, \varphi \nabla f\rangle]e^{-f}\\
	&=& -\int_M \varphi^2f\Delta_ff e^{-f}-2\int_M\langle f\nabla \varphi, \varphi \nabla f\rangle e^{-f}\\
	&\leq& \frac{1}{2} \int_M \varphi^2|\nabla f|^2e^{-f}+2\int_M f^2|\nabla\varphi|^2e^{-f}.
\end{eqnarray*}
Therefore
\[
\int_M \varphi^2 |\nabla f|^2e^{-f}\leq 4\alpha^2 \int_M|\nabla\varphi|^2e^{-f}.
\]
For a fixed $p\in M$, choose a sequence $\varphi_k\in C_0^\infty(M)$ such that $\varphi_k=1$ on $B_k^M(p)$, $\varphi_k=0$ on $M\setminus B_{2k}^M(p)$ and $|\nabla \varphi_k|\leq 1/k$. Since $M^n$ has finite weighted volume, by the dominated convergence theorem we conclude that $|\nabla f|=0$. Moreover, since f is constant and $M^n$ lies in $D_\alpha$ we have
\[
0=\Delta_{f}f\geq n\lambda-\bar{R}-|H_f|\sqrt{C+2\lambda f-\bar{R}}-C+2\lambda f-\bar{R}\geq 0,
\]
which implies that $f=\alpha$.
----------------------------------------------------------------------------------------------
From Theorem \ref{t5} and the strong maximum principle we obtain the following consequence.
\begin{cor}	Let $M^n$ be a complete embedded CWMC hypersurface in $\mathbb{R}^{n+1}$. If the second fundamental form satisfies
	\[
	|A|\leq\frac{\sqrt{H_f^2+2}-|H_f|}{2},
	\]
	then one of the following holds:\\
	(i) $M^n$ is a hyperplane;\\
	(ii) $M^n$ is a generalized cylinder;\\
	(iii) $|A|<\sup|A|=\frac{\sqrt{H_f^2+2}-|H_f|}{2}$.
\end{cor}
--------------------------------------------------------------------------------------------------
\begin{Lemma}
	Given a vector $x_0$ and $r>0$, a cylinder $S^k_r(x_0)\times \mathbb{R}^{n-k}$ is a CWMC hypersurface if and only if it is the cylinder $S^k_r(0)\times\mathbb{R}^{n-k}$ with $r=-H_f+\sqrt{H_f^2+2k}$.
\end{Lemma}
\begin{proof}
	Since $S^k_r(x_0)\times \mathbb{R}^{n-k}$ has unit normal vector $N=\frac{(x-x_0)}{r}$ and is a CWMC hypersurface, we have
	satisfies
	\[
	\frac{k}{r}=H= \frac{\langle x, x-x_0\rangle}{2r}+H_f.
	\]
	Moreover, $|x-x_0|^2=r^2$ and
	\begin{eqnarray*}
		\frac{\langle x, x-x_0\rangle}{2r}= \frac{r}{2}+\frac{\langle x_0, x-x_0\rangle}{2r}.		   
	\end{eqnarray*}
	Therefore,
	\[
	H_f=\frac{k}{r}-\frac{r}{2}- \frac{\langle x_0, x-x_0\rangle}{2r}
	\]
	which implies that $x_0=0$ and $r=-H_f+\sqrt{H_f^2+2k}$.
\end{proof}
------------------------------------------------------------------------------------------------
\section{Talvez tirar}\label{s5}

\begin{Proposition}
	Let $M^{n}$ be a hypersurface in a gradient Ricci soliton $\bar{M}^{n+1}$. If the following condition holds,
	\[
	n\lambda -\bar{R}+\bar{R}ic(N,N)+(H_f-H)H\leq 0
	\] 
	with the inequality strict at some point of $M^n$, then $M^n$ cannot be compact. 
\end{Proposition}
\begin{proof}
	By computing the Hessian of $f$,  we have
	\[
	\nabla \nabla f= \bar{\nabla}\bar{\nabla} f +\langle \bar{\nabla}f, A(\ ,\ )N\rangle
	\]
	Therefore, the Laplacian
	\begin{eqnarray*}
		\Delta f &=& \text{tr}\bar{\nabla}\bar{\nabla} f -\langle \bar{\nabla}f,N\rangle H\\
		&=& n\lambda - \text{tr}_M\bar{R}ic+(H_f-H)H\\
		&=& n\lambda -\bar{R}+\bar{R}ic(N,N)+(H_f-H)H\leq 0.
	\end{eqnarray*}
	If $M^n$ is a compact hypersurface, by the divergence theorem we get that 
	\[
	n\lambda -\bar{R}+\bar{R}ic(N,N)+(H_f-H)H=0
	\]
	which is a contradiction since there exist a point $p\in M$ such that 
	\[
	(n\lambda -\bar{R}+\bar{R}ic(N,N)+(H_f-H)H)(p)<0.
	\]
\end{proof}
As immediate consequences we have the following.
\begin{Corollary}
	Let $M^{n}$ be a hypersurface in the steady Ricci soliton $\mathbb{R}^{n+1}$. If the following condition holds,
	\[
	(H_f-H)H\leq 0
	\] 
	with the inequality strict at some point of $M^n$, then $M^n$ cannot be compact.
\end{Corollary}
\begin{Corollary}
	Let $M^{n}$ be a hypersurface in the expanding Ricci soliton $\mathbb{R}^{n+1}$. If the following condition holds,
	\[
	(H_f-H)H\leq \frac{n}{2}
	\] 
	with the inequality strict at some point of $M^n$, then $M^n$ cannot be compact.
\end{Corollary}
Notice that the last corollary generalizes the result proved by Cao-Li.
\begin{thm}
	Let $M^n$ be a compact hypersurface in a smooth measure metric space $(\bar{M}^{n+1}, \bar{g}, f)$. Assume that $\text{tr}_{M}\bar{\nabla}\bar{\nabla}f\geq a$, for some $a\geq 0$. If
	\[
	\left(H-\frac{H_f}{2}\right)^2\leq \frac{H_f^2+4a}{4},
	\]
	then $f=c$ on $M^n$, where $c\in \mathbb{R}$. Moreover, if $c$ is a regular value of $f$ and $f^{-1}(c)$ is connected, then $M^n=f^{-1}(c)$.
\end{thm}
\begin{proof}
	Computing the Laplacian of $f$, by the hypothesis we have
	\begin{eqnarray*}
		\Delta f &=& \text{tr}_M\bar{\nabla}\bar{\nabla}f+(H_f-H)H\\
		&\geq& a +(H_f-H)H\\
		&=& \frac{H_f^2+4a}{4}-\left(H-\frac{H_f}{2}\right)^2\geq 0.
	\end{eqnarray*}
	Since $M^n$ is compact, by the Divergence Theorem we have
	\[
	0=\int_M \Delta f \geq 0
	\]
	which implies that $\Delta f=0$. Using integration by parts for the function $f$ and constant function, we get
	\[
	0=\int_M \Delta_f f e^{-f}= -\int_M |\nabla f|^2 e^{-f},
	\]
	which implies that $f$ is constant on $M^n$.
\end{proof}

It is possible to obtain another result by assuming the reverse inequality from Theorem \ref{t7}.\\

\begin{Proposition}
	Let $\bar{M}^{n+1}$ be a shrinking Ricci soliton and let $M^n$ be a CWMC hypersurface in $\bar{M}^{n+1}$ with finite weighted volume. Suppose that $tr_{M^{n}}\bar{\nabla}\bar{\nabla}f\leq b$ for some $b\geq0$. If
	\[
	H\geq \frac{2H_f+|H_f|+\sqrt{H_f^2+4b}}{2},
	\]
	then $M^n\subseteq\partial D_\beta$.
\end{Proposition}
\begin{proof}
Since $M^n$ is a CWMC hypersurface, we have
\begin{eqnarray*}
	|\bar{\nabla}f|&\geq& \langle \bar{\nabla}f, N\rangle\\
	&=& H-H_f \\
	&\geq& \frac{2H_f+|H_f|+\sqrt{H_f^2+4b}}{2}-H_f\\
	&=& \frac{|H_f|+\sqrt{H_f^2+4b}}{2}.
\end{eqnarray*}
By the normalization equation, we have
\[
\sqrt{2\lambda f+C-\bar{R}}=|\bar{\nabla}f|\geq \frac{|H_f|+\sqrt{H_f^2+4b}}{2}.
\]
Therefore,
\[
f\geq\frac{1}{2\lambda}\left\lbrace \left(\frac{|H_f|+\sqrt{H_f^2+4b}}{2}\right)^2+\bar{R}-C\right\rbrace,
\]
which implies that $M^n$ lies outside $D_\beta$. By Theorem \ref{t3}, we conclude that $M^n\subseteq\partial D_\beta$.
\end{proof}
The proposition above is less general than Theorem \ref{t9}.